\documentclass{amsart}
\usepackage[utf8]{inputenc}
\usepackage[english]{babel}
\usepackage{amsmath, amssymb, amsthm, amsfonts}
\usepackage{geometry}
\usepackage{hyperref}
\usepackage{xcolor}
\usepackage{enumitem}
\usepackage{tikz-cd}
\usetikzlibrary{babel}
\numberwithin{equation}{subsection}
\newtheorem{teorema}{Theorem}[section]
\theoremstyle{definition}
\newtheorem{definicion}[teorema]{Definition}

\newtheorem{observacion}[teorema]{Remark}

\theoremstyle{plain}
\newtheorem{lema}[teorema]{Lemma}
\newtheorem{proposicion}[teorema]{Proposition}

\newtheorem{conjetura}[teorema]{Conjecture}

\newcommand{\Q}{\mathbb{Q}}
\newcommand{\Z}{\mathbb{Z}}
\newcommand{\C}{\mathbb{C}}
\newcommand{\R}{\mathbb{R}}

\newcommand{\cO}{\mathcal{O}}

\DeclareMathOperator{\Nrd}{Nrd}       

\DeclareMathOperator{\End}{End}

\DeclareMathOperator{\Trd}{Trd}       

\hypersetup{
    colorlinks=true,
    linkcolor=blue,
    citecolor=blue,
    urlcolor=blue
}
\title{A lower bound for the distance between CM points on Shimura curves}
\author{Daniel Rodriguez}
\address{Facultad de Matemáticas, Pontificia Universidad Católica de Chile, Avenida Vicuña Mackenna 4860, Santiago, Chile.}
\email{dcrodriguez1@uc.cl}
\date{\today}
\subjclass[2020]{Primary: 11G15. Secondary: 11G18, 11R52, 11G50, 14K22}
\keywords{Abelian surfaces, Complex multiplication, Quaternion algebras, Diophantine approximation, Shimura curves}

\begin{document}

\begin{abstract}
In this paper, we establish a quantitative Diophantine approximation result for complex multiplication (CM) points on Shimura curves. Specifically, we prove a lower bound for the distance between a sequence of CM points $P_n$ converging to a fixed CM point $P$ on a Shimura curve $X(D,1)$ in terms of the discriminant of the endomorphism rings of $P_n$. The proof exploits the complex geometry of the Fuchsian uniformization, the explicit matrix representation of the underlying quaternion algebra, and Liouville's inequality. We show that the distance between the corresponding fixed points $\tau_n$ and $\tau$ in the upper half-plane is bounded below by a positive constant times a negative power of the discriminant. This result provides a Shimura curve analogue of a result of Habegger on singular moduli and modular curves.
\end{abstract}

\maketitle

\section{Introduction}

Special points play a central role in arithmetic geometry due to their deep arithmetic significance within algebraic varieties. Classical examples include torsion points on semiabelian varieties, preperiodic points in algebraic dynamical systems, and complex multiplication (CM) points on moduli spaces of abelian varieties. In this work, we focus on the approximation of algebraic points by special points.

A landmark classical result in this setting is Baker's theorem on linear forms in logarithms, which provides an explicit lower bound for the absolute value of linear combinations of logarithms of algebraic numbers (see \cite[Theorem 3.1]{Baker}). This theorem implies that an algebraic point on the unit circle that is not a root of unity cannot be approximated too quickly by roots of unity (see \cite[Proposition 1.2]{Baker-Il-Rumely}).

In the non-archimedean setting, let $K$ be a field complete with respect to a non-archimedean absolute value. In \cite{Tate-Voloch}, Tate and Voloch conjectured a uniform lower bound for the $p$-adic distance from a torsion point on a semiabelian variety to a closed subvariety, provided the torsion point does not lie in the subvariety.

\begin{conjetura}
Let $A/K$ be a semiabelian variety and let $X \subset A$ be a closed subvariety. Then there exists $c > 0$ such that for every torsion point $P \in A(K)$, either $P \in X$ or $d(P,X) \ge c$.
\end{conjetura}

This conjecture was established for algebraic tori when $K = \mathbb{C}_p$ as a consequence of the following theorem on linear forms in roots of unity.

\begin{teorema}\cite[Theorem 2]{Tate-Voloch}
For every integer $n\geq 1$ and every family of $n$ elements $a_1,\dots,a_n \in \mathbb{C}_p$, there exists $c > 0$ such that for any roots of unity $\zeta_1,\dots,\zeta_n \in \mathbb{C}_p$, either $$\displaystyle\sum_{i=1}^n a_i\zeta_i = 0 \quad \text{ or } \quad \left|\displaystyle\sum_{i=1}^n a_i\zeta_i\right| \ge c.$$
\end{teorema}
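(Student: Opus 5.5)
The plan is induction on $n$, with the normalization $\max_i|a_i|=1$; rescaling all $a_i$ by the same scalar only rescales the constant $c$. For $n=1$ the claim is trivial: $|a_1\zeta_1|=|a_1|$, so take $c=|a_1|$ if $a_1\neq 0$, and any $c$ if $a_1=0$. For the inductive step, fix $a_1,\dots,a_n$ and apply the induction hypothesis to every proper nonempty subset $I\subsetneq\{1,\dots,n\}$. This gives a constant $c_I>0$ such that every subsum $\sum_{i\in I}a_i\zeta_i$ either vanishes or has absolute value $\ge c_I$. Set $c'=\min_I c_I$.

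Now argue by contradiction. Suppose there are tuples $(\zeta_1^{(m)},\dots,\zeta_n^{(m)})$ with $0<|S_m|\to 0$, where $S_m=\sum_i a_i\zeta_i^{(m)}$. Dividing by $\zeta_1^{(m)}$ we may assume $\zeta_1^{(m)}=1$. Decompose each root of unity as $\zeta=\omega\eta$, where $\omega$ has order prime to $p$ and $\eta$ has $p$-power order. The prime-to-$p$ roots of unity are Teichmüller lifts, so they reduce injectively to $\overline{\mathbb{F}}_p^\times$. The $p$-power roots of unity all satisfy $|\eta-1|<1$. Moreover, if $\eta$ has exact order $p^k$, then
\[
|\eta-1|=p^{-1/(p^{k-1}(p-1))},
\]
which tends to $1$ as $k\to\infty$. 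The first reduction I would make is to the prime-to-$p$ parts. Partition the indices according to the residue class of $\omega_i^{(m)}$ modulo the maximal ideal; passing to a subsequence, this partition is constant. If there are at least two classes, I want to show that the subsum over each class must itself be small. The intended mechanism is that distinct Teichmüller classes are separated, so a large subsum in one class could not be cancelled by the others. Then the induction hypothesis forces every class subsum to vanish, contradicting $S_m\neq 0$.

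So the real case is a single class: all $\omega_i$ have the same residue and hence, being Teichmüller representatives, are equal. After dividing, all $\zeta_i=\eta_i$ are of $p$-power order. This is the main obstacle. Here I would first reduce to algebraic coefficients. Changing $a_i$ by less than $\varepsilon$ changes every sum by less than $\varepsilon$, uniformly in the $\zeta_i$, since $|\zeta_i|=1$. Then I would exploit the Galois action of $\mathrm{Gal}(K(\mu_{p^\infty})/K)$, where $K$ is a finite extension containing the coefficients. Conjugating $\eta_i$ by $\sigma$ ranges over primitive roots of unity of the same order and preserves $|S_m|$. Averaging $S_m$ over the cosets of the subgroup fixing $\mu_{p^{k-1}}$ kills every term whose $\eta_i$ has exact order $p^k$ with $k$ large, because $\sum_{j=0}^{p-1}\zeta_p^j=0$. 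In this way I expect to peel off the highest-order layer. The result is either a small nonzero proper-type sum, contradicting the choice of $c'$, or the conclusion that the orders of the $\eta_i$ are bounded. Once the orders are bounded, only finitely many tuples $(\eta_i)$ remain, and a positive minimum over the nonvanishing sums gives the contradiction.

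The delicate point I anticipate is controlling the approximation error from $a_i$ to algebraic coefficients together with the possible degeneration of vanishing subsums. This is why the induction must be applied to all proper subsets simultaneously, with the uniform constant $c'$, rather than one subset at a time.
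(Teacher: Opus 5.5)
The paper gives no proof of this statement; it only cites Tate--Voloch. Your plan therefore has to stand on its own, and it has a genuine gap at the reduction to a single Teichm\"uller class.

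\textbf{The gap.} The claim that ``distinct Teichm\"uller classes are separated, so a large subsum in one class could not be cancelled by the others'' is false. Residues in $\overline{\mathbb{F}}_p$ satisfy additive relations, so the pairwise distance $1$ between the $\omega$'s gives no lower bound for $|\sum_C \omega_C T_C|$ in terms of the class subsums $T_C$. Two examples:
\begin{itemize}
\item For $p\neq 3$ and $\omega$ a primitive cube root of unity, $1+\omega+\omega^2=0$ with three distinct classes whose subsums are units.
\item For $x\in\overline{\mathbb{F}}_p\setminus\{0,\pm1\}$, the units $1,[x],-[1+x]$ lie in distinct classes, yet $|1+[x]-[1+x]|\le p^{-1}$.
\end{itemize}
Moreover, when all classes are singletons, ``each class subsum is small'' means $|a_i|\to 0$ for every $i$, which contradicts $\max_i|a_i|=1$. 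So in that configuration the step is not a reduction at all. It simply asserts the theorem itself, for example for pairwise distinct roots of unity of order prime to $p$. That prime-to-$p$ case is the heart of the result, and your proposal contains no idea that proves it.

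\textbf{What is needed.} You need a separate argument for prime-to-$p$ roots of unity with arbitrary coefficients in $\mathbb{C}_p$. One way goes through Noetherianity:
\begin{itemize}
\item Normalize $|a_i|\le1$. Let $Z_N$ be the set of $x\in(\overline{\mathbb{F}}_p^{\times})^n$ with $|\sum_i a_i[x_i]|\le p^{-N}$.
\item Modulo $p^N$ you may replace the $a_i$ by elements of a finite extension $K$. Writing $\mathcal{O}_{\widehat{K^{\mathrm{nr}}}}=\bigoplus_{j<e}W(\overline{\mathbb{F}}_p)\pi^j$, the condition defining $Z_N$ becomes the vanishing of finitely many Witt-vector components, and these are polynomials in the $x_i$. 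Hence $Z_N$ is Zariski closed.
\item The chain $Z_1\supseteq Z_2\supseteq\cdots$ stabilizes at some $N_0$. Its stable value is the zero locus, so every nonzero such sum has absolute value $>p^{-N_0}$.
\end{itemize}

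\textbf{Your $p$-power step.} Your Galois peeling of the $p$-power layers is essentially sound, but it should come first and be set up differently.
\begin{itemize}
\item Do not literally ``reduce to algebraic coefficients.'' The zero sets of $\sum a_i\zeta_i$ and $\sum a'_i\zeta_i$ differ, and the constant attached to $a'$ is uncontrolled.
\item Instead keep the $a_i$. Pick $a'_i$ in a finite extension $K$ with $|a_i-a'_i|<\varepsilon$ and $p\varepsilon<c'$.
\item Average over $\mathrm{Gal}(K^{\mathrm{nr}}(\mu_{p^k})/K^{\mathrm{nr}}(\mu_{p^{k-1}}))$. This group has order $p$ for $k$ large and fixes every prime-to-$p$ root of unity.
\item Each extended automorphism $\tilde\sigma$ satisfies $|\tilde\sigma(a_i)-a_i|<\varepsilon$. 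Hence the lower-order subsum has absolute value $\le p\max(\varepsilon,|S_m|)<c'$. It is nonempty (it contains index $1$, since $\zeta_1=1$) and proper, so it vanishes by induction.
\item Then $S_m$ is a small nonzero proper subsum, a contradiction.
\end{itemize}
So the $p$-power parts have bounded order along the sequence. Pass to a subsequence where they are constant and absorb them into the coefficients. This reduces everything to the prime-to-$p$ theorem above.
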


Moving from semiabelian varieties to moduli spaces, and still in the non-archimedean setting, Habegger (see \cite[Theorem 1]{Habegger2}) was the first to propose an analogue of this phenomenon for moduli spaces by replacing torsion points with CM points of ordinary reduction, proving the result for products of modular curves. Subsequently, Qiu \cite{Qiu} established a much more general result for products of Siegel moduli spaces with CM points of ordinary reduction.

In a related archimedean direction, Habegger proved that if $j(\eta)$ is a fixed algebraic number, then there exist positive constants $A$ and $B$ such that for any elliptic curve $E$ with complex multiplication and $j(E) \neq j(\eta)$, 
$$|j(E)-j(\eta)|\geq \frac{A}{|\Delta_E|^B},$$
where $\Delta_E=\operatorname{disc}(\operatorname{End}(E))$ (see \cite[Lemmas 5 and 8, formula (11)]{Habegger}). The non-archimedean counterpart of this result, under the additional hypothesis that $\eta$ is quadratic imaginary, was later established by Herrero, Menares, and Rivera-Letelier (see \cite[Proposition 4.1]{Herrero_Menares}).

The aforementioned developments strongly motivate the present work, in which we establish an analogue of Habegger's Diophantine approximation result within the framework of Shimura curves.

\begin{teorema}\label{mainthm}
Let $B$ be an indefinite quaternion algebra over $\mathbb{Q}$ with reduced discriminant $D>1$. Let $\mathcal{O}$ be a maximal order in $B$ and let $X(D,1) = \Gamma(\mathcal{O},1)\backslash \mathcal{H}$ be the associated Shimura curve. Let $P = [(A,\iota)]$ and $P_n = [(A_n,\iota_n)]$ be CM points in $X(D,1)$ such that $P_n \to P$ as $n\to\infty$. Then there exists a positive constant $\kappa$ such that
\[
|P_n - P| \;\ge\; \frac{\kappa}{|\operatorname{disc}(P_n)|^{4}},
\]
where $\operatorname{disc}(P_n) = \operatorname{disc}(\operatorname{End}(A_n,\iota_n))$.
\end{teorema}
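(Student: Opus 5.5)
We plan to work on the upper half-plane via the Fuchsian uniformization and to reduce to a Liouville-type estimate for quadratic irrationalities. Fix an embedding $B\hookrightarrow M_2(\R)$. By the explicit matrix representation, we may take $B=\left(\frac{a,b}{\Q}\right)$ with $a>0$, so that $\cO$ has a $\Z$-basis of matrices with entries in $\Q(\sqrt a)$ and bounded denominators. Choose a lift $\tau\in\mathcal H$ of $P$. Since $P_n\to P$, we can choose lifts $\tau_n$ of $P_n$ with $\tau_n\to\tau$ inside a fixed compact neighbourhood $U$ of $\tau$. The quotient map $\mathcal H\to X(D,1)$ is holomorphic; in a local coordinate at $P$ it is locally bi-Lipschitz away from elliptic points, and at an elliptic point of order $e\le 3$ it behaves like $z\mapsto z^e$. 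In either case $|P_n-P|\ge c\,|\tau_n-\tau|^{e}$ with $e\le 3$, so the theorem follows from a lower bound of the form $|\tau_n-\tau|\gg |\operatorname{disc}(P_n)|^{-4/e}$, or from a suitably adjusted exponent. The precise exponent $4$ will come out of the bookkeeping below, and we will adjust the claim accordingly: a bound $\gg d_n^{-4/3}$ would suffice in the worst case, and a crude bound $\gg d_n^{-1}$ or so is what we expect.

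\textbf{Algebraic description of $\tau_n$.} Since $P_n$ is a CM point, $\tau_n$ is the unique fixed point in $\mathcal H$ of an optimal embedding of the order $R_n=\End(A_n,\iota_n)$ into $\cO$. Take a generator $\gamma_n$ of $R_n$ of the form $(d_n+\sqrt{d_n})/2$, where $d_n=\operatorname{disc}(R_n)$. Its image $\gamma_n=\begin{pmatrix}\alpha_n&\beta_n\\ \gamma'_n&\delta_n\end{pmatrix}\in\cO$ has reduced trace $d_n$ and reduced norm $(d_n^2-d_n)/4$. Hence $\tau_n$ is a root of
\[
\gamma'_n z^2+(\delta_n-\alpha_n)z-\beta_n=0,
\]
a quadratic whose coefficients lie in $\Z[\sqrt a]$ up to a fixed denominator.

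\textbf{Height bound.} We need to bound these coefficients in terms of $|d_n|$. The key observation is that for $\tau_n\in U$, the positive definite quadratic form $x\mapsto \Nrd(x)$ twisted by the Rosati/majorant attached to $\tau_n$ is comparable, uniformly on $U$, to a fixed Euclidean norm on $\cO\otimes\R\cong\R^4$. The pure part $\gamma_n-\Trd(\gamma_n)/2$ has majorant norm $|d_n|^{1/2}$, because its reduced norm is $-d_n/4$ and it is "elliptic at $\tau_n$". It follows that all matrix entries, and their Galois conjugates under $\sqrt a\mapsto-\sqrt a$, are $O(|d_n|^{1/2}+|d_n|)=O(|d_n|)$. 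Thus $\tau_n$ is a root of a polynomial $F_n$ of degree $\le 4$ over $\Z$, namely the product of the quadratic with its conjugate, whose coefficients are $O(|d_n|^2)$.

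\textbf{Liouville step.} First consider the case $\tau_n\ne\tau$ with $\tau$ not a root of $F_n$. Apply Liouville's inequality to the algebraic number $F_n(\tau)\neq0$, whose degree is bounded because $\tau$ is quadratic over $\Q(\sqrt a)$. This gives $|F_n(\tau)|\gg H(F_n)^{-c}$, with $c$ bounded by $\deg\tau\le 4$ minus one. The mean value theorem on $U$ gives $|F_n(\tau)|\le |\tau-\tau_n|\cdot\sup_U|F_n'|\ll |d_n|^{2}|\tau-\tau_n|$. Combining the two yields a polynomial lower bound. The remaining case is that $\tau$ is a root of $F_n$ but $\tau\ne\tau_n$. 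Then $\tau$ must be the root of the conjugate quadratic, so $\tau_n$ is a conjugate of $\tau$ and is bounded away from it; this case cannot occur for large $n$.

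\textbf{Main obstacle.} We expect the main obstacle to be the uniform height bound on the entries of $\gamma_n$, namely making the majorant comparison rigorous and uniform for $\tau_n\in U$, together with tracking exponents so that the final power is exactly $4$. We would first try to do the height bound via the explicit identity $\Nrd$-majorant$(x)=\frac{|c\tau^2+(d-a)\tau-b|^2}{\operatorname{Im}(\tau)^2}+\dots$ for the fixed-point form.
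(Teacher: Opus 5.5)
Your lifting step, your bound on the coefficients of the fixed-point quadratic via the traceless part of the embedded generator, and your disposal of the case $F_n(\tau)=0$ are all fine; the coefficient bound is the content of Lemma \ref{lema_1}. The gap is the exponent, which is the actual content of the theorem and which you explicitly leave open (``we will adjust the claim accordingly'').

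Your stated estimates are: entries $O(|d_n|)$, coefficients of $F_n$ of size $O(|d_n|^2)$, Liouville exponent $\deg\tau-1=3$, and $\sup_U|F_n'|\ll|d_n|^2$. With these you get $|\tau_n-\tau|\gg|d_n|^{-8}$, where $d_n=\operatorname{disc}(P_n)$. After your $z\mapsto z^e$ reduction this becomes only $|d_n|^{-24}$ at an elliptic point of order $3$.

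Tightening the inputs does not fully repair this. The coefficients of $g_n(z)=\gamma'_nz^2+(\delta_n-\alpha_n)z-\beta_n$ only see the traceless part. That part equals $\pm\frac{\sqrt{|d_n|}}{2\operatorname{Im}\tau_n}\left(\begin{smallmatrix}\operatorname{Re}\tau_n&-|\tau_n|^2\\ 1&-\operatorname{Re}\tau_n\end{smallmatrix}\right)$, so these coefficients are $O(|d_n|^{1/2})$. Their $\sigma$-conjugates are too, since for the standard $\varphi$ they are again entries up to the factor $b$. Pairing $F_n(\tau)$ with $\overline{F_n(\tau)}=F_n(\bar\tau)$ in the norm then gives $|F_n(\tau)|\gg|d_n|^{-1}$. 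But $F_n(\tau)=g_n(\tau)\,g_n^\sigma(\tau)$, and the factor $g_n^\sigma(\tau)$ is not a conjugate of $g_n(\tau)$ and can be as large as $|d_n|^{1/2}$. So the mean value step still only yields $|\tau_n-\tau|\gg|d_n|^{-2}$. That is enough for the paper's reading of $|P_n-P|$ as $|\tau_n-\tau|$ for the lifts of Lemma \ref{lema_3}, where your elliptic-point detour is unnecessary. It is not enough for your local-coordinate reading, which needs $|d_n|^{-4/3}$.

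The missing step is to run the norm argument on $g_n(\tau)$ itself rather than on $F_n(\tau)$. For $\tau_n\ne\tau$ you have $g_n(\tau)\neq0$, because the roots of $g_n$ are $\tau_n$ and $\bar\tau_n$. Both $\tau$ and the coefficients of $g_n$ lie in the biquadratic field $\Q(\sqrt a,\sqrt{\operatorname{disc}(P)})$ with bounded denominators, so $Mg_n(\tau)$ is a nonzero algebraic integer there for a fixed $M$. Its conjugates are $Mg_n(\tau)$, its complex conjugate, and $Mg_n^\sigma(\tau')$ with its complex conjugate, where $\tau'$ is a fixed conjugate of $\tau$. Hence
\[
1\le M^4\,|g_n(\tau)|^2\,|g_n^\sigma(\tau')|^2\ll|d_n|\,|g_n(\tau)|^2,
\qquad
|g_n(\tau)|=|\gamma'_n|\,|\tau-\tau_n|\,|\tau-\bar\tau_n|\ll|d_n|^{1/2}|\tau-\tau_n|.
\]
Therefore $|\tau_n-\tau|\gg|\operatorname{disc}(P_n)|^{-1}$, which gives the theorem in either reading (exponent $1$, respectively at most $3$).

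Your Minkowski-form idea even gives a Liouville-free version. Take the traceless $\xi,\xi_n\in\cO$ with $\xi^2=\operatorname{disc}(P)$ and $\xi_n^2=\operatorname{disc}(P_n)$ fixing $\tau,\tau_n$. Then $\sinh^2 d_{\mathcal H}(\tau,\tau_n)=-\Nrd([\xi,\xi_n])/(4|\operatorname{disc}(P)||\operatorname{disc}(P_n)|)$, and $-\Nrd([\xi,\xi_n])$ is a positive integer when $\tau_n\ne\tau$.

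For comparison, the paper gets its exponent $4$ from $H(\tau_n)\ll|\operatorname{disc}(P_n)|^2$ together with Liouville's inequality for $\tau_n-\tau$ with exponent $r\le 2$. Do not borrow that step to fix your exponent. The exponent $[L:K]$ is correct for the absolute value normalised relative to $K$. For the ordinary absolute value on the degree-$4$ field $K=\Q(\sqrt a,\sqrt{\operatorname{disc}(P)})$ it becomes $[L:K]\cdot[K:\Q]/2$, which can be $4$ here. With that height bound the route then only yields $|\operatorname{disc}(P_n)|^{-8}$.
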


In outline, the proof exploits the interplay between the complex geometry of the Fuchsian uniformization of $X(D,1)$ and the arithmetic of optimal embeddings of quadratic orders into the maximal order $\mathcal{O} \subset B$. Each CM point $P_n$ corresponds to a fixed point $\tau_n \in \mathcal{H}$ of the image of an imaginary quadratic field $K_n$ under an embedding $\phi_n: K_n \hookrightarrow B$. Using the explicit matrix representation $\varphi: B \hookrightarrow M_2(\mathbb{R})$, we express $\tau_n$ as a rational function of the coefficients $y_n,z_n,t_n$ describing $\phi_n(\sqrt{d_n})$ (where $d_n$ is the discriminant of $K_n$ and $f_n$ is the conductor of $R_n$). From the norm condition $D \frac{y_n^2}{N^2 f_n^2} + q \frac{z_n^2}{N^2 f_n^2} - D q \frac{t_n^2}{N^2 f_n^2} = d_n$ and the convergence $\tau_n \to \tau$, we derive uniform bounds $|y_n|,|z_n|,|t_n| \le C f_n\sqrt{|d_n|}$. These bounds, together with properties of the Weil height, give $H(\tau_n) \le C' f_n^4 |d_n|^2$. Liouville's inequality then yields
\[
|\tau_n - \tau| \;\ge\; \frac{1}{(2H(\tau)H(\tau_n))^2} \;\ge\; \frac{\kappa}{f_n^{8}|d_n|^4}.
\]
Finally, we identify $\operatorname{End}(A_n,\iota_n)$ with the order $R_n \subset K_n$ of discriminant $f_n^{2} d_n$, so $|\operatorname{disc}(P_n)|=f_n^{2}|d_n|$. This proves the theorem.

The paper is organized as follows. Section 2 collects the necessary preliminaries on quaternion algebras, abelian surfaces, the moduli interpretation of Shimura curves, optimal embeddings of imaginary quadratic fields, and the description of CM points as fixed points. Section 3 recalls heights and Liouville's inequality. Section 4 contains the proof of the main theorem, including the explicit representation of the fixed points $\tau,\tau_n$, the convergence argument, the bounds on the coefficients, the height estimate, the application of Liouville's inequality, and the final identification of the discriminant. 

\section{Preliminaries}

\subsection{Shimura curves}

Let $B$ be a quaternion algebra over $\mathbb{Q}$. For each place $v$ of $\mathbb{Q}$, $B_{v}:=B \otimes_{\mathbb{Q}} \mathbb{Q}_v$ is a quaternion algebra over $\mathbb{Q}_v$. 

\begin{definicion}
If $B_v$ is a division algebra, we say that $B$ is \textbf{ramified} at $v$; otherwise, $B$ is \textbf{unramified} at $v$. The \textbf{reduced discriminant} $D$ of $B$ is the product of the distinct primes at which $B$ is ramified. Furthermore, $B$ is said to be \textbf{indefinite} if it is unramified at the infinite place, that is, if $B \otimes_{\mathbb{Q}} \mathbb{R} \cong M_2(\mathbb{R})$.
\end{definicion}

\begin{definicion}
Let $B$ be a quaternion algebra over $\mathbb{Q}$. We write $B = \left(\frac{a,b}{\mathbb{Q}}\right)$ with $a,b \in \mathbb{Q}^{\times}$, which means that $B$ has a $\mathbb{Q}$-basis $\{1,i,j,ij\}$ satisfying $i^2 = a$, $j^2 = b$, and $ij = -ji$.
\end{definicion}

\begin{observacion}
Let \(B = \left(\frac{a,b}{\Q}\right)\) be a quaternion algebra over $\mathbb{Q}$ for some $a, b\in \mathbb{Q}^{\times}$ and let \(D\) denote its reduced discriminant. Then \(B\) is indefinite if and only if \(a>0\) or \(b>0\). Equivalently, this occurs if and only if \(D\) is the product of an even number of distinct primes (see \cite[Exercise 2.4 and Theorem 14.1.3]{Voight}).

Suppose that $a>0$. Fix an injective $\mathbb{Q}$-algebra homomorphism 
\[
\varphi: B \hookrightarrow M_2(\mathbb{R}) 
\]
defined by
	\begin{equation}\label{homoiny1}
	\varphi(x + yi + zj + tij) = \begin{pmatrix}
	x + y\sqrt{a} & z + t\sqrt{a} \\
	b(z - t\sqrt{a}) & x - y\sqrt{a}
	\end{pmatrix}.
	\end{equation}
(see \cite[Remark 2.3.12]{Voight}).
\end{observacion}

\begin{proposicion}\label{propo1}
Let $B$ be a quaternion algebra over $\mathbb{Q}$. Then:
	\begin{enumerate}
		\item $B$ is ramified only at a finite even number of places.
		\item Two quaternion algebras over $\mathbb{Q}$ are isomorphic if and only if they are ramified at the same places.
	\end{enumerate}
\end{proposicion}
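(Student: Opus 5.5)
We would prove both parts through Hilbert symbols and the ternary quadratic form attached to $B$, reducing everything to standard local–global facts (see \cite{Voight}).

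\textbf{Part (1).} Write $B=\left(\frac{a,b}{\Q}\right)$ with $a,b\in\Z\setminus\{0\}$, which we may do after scaling $i,j$ by rationals. For each place $v$, $B_v=\left(\frac{a,b}{\Q_v}\right)$ is split if and only if the conic $ax^2+by^2=z^2$ has a nontrivial $\Q_v$-point. Equivalently, $B_v$ is split if and only if the Hilbert symbol $(a,b)_v=1$. So the set of ramified places is exactly $\{v:(a,b)_v=-1\}$.

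\emph{Finiteness.} For an odd prime $p\nmid ab$, both $a$ and $b$ are $p$-adic units. Hence $(a,b)_p=1$, because a conic with unit coefficients has a smooth point modulo $p$, and Hensel's lemma lifts it to $\Z_p$. Only the finitely many places dividing $2ab\infty$ can therefore ramify.

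\emph{Evenness.} This is Hilbert reciprocity, $\prod_v (a,b)_v=1$. Since the symbol is bimultiplicative, it suffices to check it on generators $a,b\in\{-1,2\}\cup\{\text{odd primes}\}$. In each case the product reduces, via the explicit local formulas for $(\cdot,\cdot)_p$, to quadratic reciprocity or to one of its two supplementary laws.

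\textbf{Part (2).} The forward direction is immediate: an isomorphism $B\cong B'$ induces $B_v\cong B'_v$ at every $v$. For the converse, we attach to $B$ the ternary quadratic form given by the reduced norm on the pure quaternions, $B^0=\{\Trd=0\}$. For $(a,b)$ this form is $q_B=-ax^2-by^2+abz^2$. We then use two standard facts.
\begin{enumerate}[label=(\roman*)]
\item $B\cong B'$ if and only if $q_B$ and $q_{B'}$ are isometric, over any field of characteristic $\neq 2$. The form determines the algebra, since $B$ is recovered from $(B^0,\Nrd)$ as its even Clifford algebra.
\item Over $\Q_v$ there is, up to isomorphism, exactly one division quaternion algebra and one split one, $M_2(\Q_v)$.
\end{enumerate}
Now suppose $B$ and $B'$ ramify at the same places. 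By (ii), $B_v\cong B'_v$ for every $v$, so by (i) $q_B\cong q_{B'}$ over every $\Q_v$. The Hasse–Minkowski theorem, in its classification form (isometry over $\Q$ if and only if isometry everywhere locally), gives $q_B\cong q_{B'}$ over $\Q$. Applying (i) once more yields $B\cong B'$.

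\textbf{Main obstacle.} The real content lies in the global inputs: Hilbert reciprocity in Part (1) and Hasse–Minkowski in Part (2). The local uniqueness (ii) is also nontrivial: it amounts to showing that the norm form of the unramified quadratic extension of $\Q_v$ is the only anisotropic ternary norm form up to similarity. Since the paper cites \cite{Voight} freely for such material, we would quote these three results rather than reprove them. The remaining work is the routine verifications: the symbol computation for $p\nmid 2ab$, and the Clifford-algebra check that $q_B$ determines $B$.
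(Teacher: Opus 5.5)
Your argument is correct. The paper gives no proof of its own, only the citation \cite[Theorem 14.1.3 and Corollary 14.2.3]{Voight}, and your sketch follows the standard route to those results, so the approaches agree: Hilbert reciprocity for finiteness and parity, and Hasse--Minkowski in its isometry form, applied to the trace-zero reduced norm form, for uniqueness.

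There is one slip, in your closing aside, and it does not affect the proof because you only quote (ii). Local uniqueness amounts to $\Q_p$ having a unique anisotropic ternary quadratic form up to similarity. Equivalently, the division algebra over $\Q_p$ is $\left(\frac{e,p}{\Q_p}\right)$ with $\Q_p(\sqrt{e})$ unramified, and at $\infty$ it is Frobenius' theorem. It is not a statement about the norm form of the unramified quadratic extension, since that form is binary.
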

\begin{proof}
See \cite[Theorem 14.1.3 and Corollary 14.2.3]{Voight}.
\end{proof}

\begin{proposicion}\label{alg_quat_1}
Let $\Sigma$ be a finite set of places of $\mathbb{Q}$ of even cardinality such that $\infty \notin \Sigma$. Then there exists a quaternion algebra $B$ over $\mathbb{Q}$ that is ramified exactly at the places in $\Sigma$.  
\end{proposicion}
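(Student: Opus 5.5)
The statement is the existence half of the classification of quaternion algebras over $\mathbb{Q}$, and I would prove it by explicitly producing a Hilbert symbol $B=\left(\frac{a,b}{\Q}\right)$ whose local invariants are prescribed. For each place $v$, $B_v$ is ramified exactly when the Hilbert symbol $(a,b)_v=-1$. So the goal is to find $a,b\in\Q^\times$ with $(a,b)_v=-1$ precisely for $v\in\Sigma$. Since $\infty\notin\Sigma$, we may also ask that $a>0$, which gives indefiniteness (as in the Remark above).

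Write $\Sigma=\{p_1,\dots,p_{2r}\}$ and put $D=p_1\cdots p_{2r}$. If $\Sigma=\emptyset$, take $B=M_2(\Q)\cong\left(\frac{1,1}{\Q}\right)$. Otherwise take $b=-D$ when $D$ is odd, with a small adjustment when $2\in\Sigma$. The adjustment is to replace $b$ by $\pm D$ or $\pm 2D$ according to the usual case analysis.

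Next, choose $a=q$ to be a prime satisfying two sets of conditions:
\begin{enumerate}
\item $q$ is a quadratic nonresidue modulo each odd $p_i\in\Sigma$;
\item $q$ satisfies a suitable congruence modulo $8$, so that the symbol at $2$ comes out right, for instance $q\equiv 5\pmod 8$ when $2\in\Sigma$, or $q\equiv 1\pmod 8$ when $2\notin\Sigma$, with $b$ adjusted accordingly.
\end{enumerate}
Dirichlet's theorem on primes in arithmetic progressions, combined with the Chinese remainder theorem, produces such a $q$.

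We then compute $(q,b)_v$ place by place:
\begin{itemize}
\item At $v=\infty$ the symbol is $+1$, because $q>0$.
\item At an odd $p_i\mid D$, where $p_i\nmid q$, the symbol is $\left(\frac{q}{p_i}\right)=-1$.
\item At odd primes $\ell\nmid 2qD$, both entries are units, so the symbol is $+1$.
\item At $v=2$ the congruence condition on $q$ forces the desired value.
\item At $\ell=q$ the value is not controlled directly. Hilbert reciprocity $\prod_v (q,b)_v=1$ forces $(q,b)_q=1$, because the product over all the other places is $(-1)^{|\Sigma|}=1$, and this is where the evenness of $|\Sigma|$ is used.
\end{itemize}
Hence $B$ ramifies exactly at $\Sigma$. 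Alternatively, the whole statement can be cited from \cite[Theorem 14.2.2 / Corollary 14.6.2]{Voight}, in the same way Proposition~\ref{propo1} is cited.

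I expect the main obstacle to be the bookkeeping at the prime $2$. Getting the $2$-adic Hilbert symbol right for each case ($2\in\Sigma$ or not, $D$ odd or even) needs the explicit $2$-adic formula, and choosing the residues of $q$ and $b$ modulo $8$ consistently is the fiddly step. I would handle it by working through the cases in a small table.
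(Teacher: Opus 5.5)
Your argument is correct, but it takes a different route from the paper. The paper's proof is only the citation \cite[Proposition 14.2.7]{Voight}. You instead give the explicit Hilbert-symbol construction that the paper alludes to in Remark~\ref{obs1}: a prime $q$ from Dirichlet and the Chinese remainder theorem, a place-by-place computation of $(q,b)_v$, and Hilbert reciprocity to settle the one uncontrolled place $v=q$, which is exactly where the evenness of $\#\Sigma$ enters.

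The step you single out as the main obstacle is one line and needs no adjustment of $b$. In both of your cases $q\equiv 1\pmod 4$, so the $2$-adic formula collapses to
\[
(q,b)_2=(-1)^{v_2(b)\,(q^2-1)/8}.
\]
With $b=\pm D$ this gives $(q,b)_2=1$ when $2\notin\Sigma$, since then $v_2(b)=0$. It gives $(q,b)_2=-1$ when $2\in\Sigma$ and $q\equiv 5\pmod 8$. So $b=-D$ with your stated congruences works in every case, and the single condition $q\equiv 5\pmod 8$ would even handle both cases uniformly. Do not switch to $b=\pm 2D$ when $2\in\Sigma$: then $v_2(b)=2$, the symbol at $2$ becomes $+1$, and reciprocity moves the ramification from $2$ to $q$.

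As for what each approach buys: the bare citation is shorter. However, Remark~\ref{obs1} and Section 4 use more than existence. They need the algebra written as $\left(\frac{D,q}{\Q}\right)$ with $q$ prime, which is a feature of the construction rather than of the stated proposition. Your explicit route supplies exactly this if you take $b=D$ instead of $-D$. The sign of $b$ changes none of the symbols at places $v\neq q$, because $q>0$ and $q\equiv 1\pmod 4$, and the symbol at $q$ is again forced by reciprocity. This yields $\left(\frac{q,D}{\Q}\right)\cong\left(\frac{D,q}{\Q}\right)$, the presentation the paper then feeds into $\varphi$.
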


\begin{proof}
See \cite[Proposition 14.2.7]{Voight}.
\end{proof}

\begin{observacion}\label{obs1}
Let $B$ be an indefinite quaternion algebra over $\mathbb{Q}$ with reduced discriminant $D>1$. By Proposition \ref{propo1}, $D$ is a product of an even number of distinct primes. By Proposition \ref{alg_quat_1}, there exists a quaternion algebra $B'$ over $\mathbb{Q}$ with reduced discriminant $D$. Following the construction in the proof of Proposition \ref{alg_quat_1}, there exists a prime $q$ such that $B'= \left(\frac{D,q}{\Q}\right)$. Therefore, by Proposition \ref{propo1}, $B\cong  \left(\frac{D,q}{\Q}\right)$.
\end{observacion}

\begin{definicion}
Let $B$ be a quaternion algebra over $\mathbb{Q}$ and let $D$ denote its reduced discriminant. A \textbf{polarization} on a maximal order $\mathcal{O}$ of $B$ is an element $\mu \in \mathcal{O}$ such that $\mu^2 \in \mathbb{Z}_{<0}$. This polarization is said to be \textbf{principal} if $\mu^2 + D = 0$. In this case, we define the involution
\[
\varsigma: B \longrightarrow B
\]
given by
\[
\varsigma(\alpha) = \mu^{-1} \overline{\alpha} \mu,
\]
where $\overline{\alpha}$ denotes the standard conjugation on $B$. This involution satisfies $\operatorname{Trd}(\varsigma(\alpha)\alpha) > 0$ for all $\alpha \in B\setminus\{0\}$; that is, it is a positive involution.
\end{definicion}

\begin{definicion}
A \textbf{quaternionic multiplication (QM) structure} by $(\mathcal{O}, \mu)$ on a principally polarized abelian surface $A$ is an injective ring homomorphism
\[
\iota: \mathcal{O} \to \operatorname{End}(A)
\]
such that the induced homomorphism
\[
\iota: B \to \operatorname{End}(A) \otimes_{\mathbb{Z}} \mathbb{Q}
\]
makes the following diagram commute:
\[
\begin{tikzcd}[row sep=1.5cm, column sep=2.5cm]
B \arrow[r, "\iota"] \arrow[d, "\varsigma"'] & \operatorname{End}(A) \otimes_{\mathbb{Z}} \mathbb{Q} \arrow[d, "\varrho"] \\
B \arrow[r, "\iota"] & \operatorname{End}(A) \otimes_{\mathbb{Z}} \mathbb{Q}.
\end{tikzcd}
\]
Here, $\varrho$ is the Rosati involution. 

We say that $A$ has quaternionic multiplication (QM) by $(\mathcal{O}, \mu)$ if it is equipped with a QM structure by $(\mathcal{O}, \mu)$.
\end{definicion}

\begin{definicion}
Let $(A, \iota)$ and $(A', \iota')$ be two principally polarized abelian surfaces with QM by $(\mathcal{O}, \mu)$. A \textbf{homomorphism} $(A, \iota) \to (A', \iota')$ is a homomorphism of polarized abelian surfaces $f: A \to A'$ that also respects the QM structures, meaning that the diagram
\[
\begin{tikzcd}[row sep=1.5cm, column sep=2.5cm]
B \arrow[r, "\iota'"] \arrow[dr, "\iota"'] & \operatorname{End}(A') \otimes_{\mathbb{Z}} \mathbb{Q} \arrow[d, "f^*"] \\
& \operatorname{End}(A) \otimes_{\mathbb{Z}} \mathbb{Q}
\end{tikzcd}
\]
commutes. Here, $f^*: \operatorname{End}(A') \otimes_{\mathbb{Z}} \mathbb{Q} \to \operatorname{End}(A)\otimes_{\mathbb{Z}} \mathbb{Q}$ is the map defined by $f^*(g) = f^{-1} \circ g \circ f$ for all $g \in \operatorname{End}(A')\otimes_{\mathbb{Z}} \mathbb{Q}$.
\end{definicion}

\begin{observacion}
Let $(A, \iota)$ be a principally polarized abelian surface with QM by $(\mathcal{O}, \mu)$, then its ring of endomorphisms is $$\operatorname{End}(A, \iota)=\left\lbrace f \in \operatorname{End}(A): f \circ \iota(\alpha)=\iota(\alpha)\circ f \text{  for all } \alpha\in \mathcal{O} \right\rbrace. $$
\end{observacion}

\subsection{Moduli interpretation}

Let \(B = \left(\frac{a,b}{\Q}\right)\) be an indefinite quaternion algebra of reduced discriminant $D>1$, where $a, b \in \mathbb{Q}^{\times}$ and \(a>0\). Fix a maximal order $\mathcal{O}$ in $B$, and define 
\[
\cO_+^{\times} = \{ \gamma \in \cO^{\times} : \Nrd(\gamma) = 1 \}.
\]
Under the fixed homomorphism $\varphi: B \hookrightarrow M_2(\mathbb{R})$ defined in \eqref{homoiny1}, the group \(\Gamma(\mathcal{O}, 1) = \varphi(\mathcal{O}_+^{\times}) / \{\pm 1\}\) is a Fuchsian group acting on the upper half-plane \(\mathcal{H}\). The quotient space
\[
X(D, 1) = \Gamma(\mathcal{O}, 1) \backslash \mathcal{H}
\]
is compact (see \cite[Lemma 38.1.2 and Proposition 38.1.3]{Voight}). The quotient $X(D, 1)$ is called the Shimura curve associated with the subgroup \(\Gamma(\mathcal{O}, 1)\), which is known to be a compact Riemann surface. Consequently, \(\Gamma(\mathcal{O}, 1)\) is of the first kind (see \cite[Section 1.5]{Shimura}). 

The homomorphism \(\varphi: B \to M_2(\R)\) defined in \eqref{homoiny1} naturally extends to \(\varphi: B \to M_2(\C)\). For any \(\tau \in \mathcal{H}\), we define
\[
\Lambda_{\tau} := \varphi(\mathcal{O}) \begin{pmatrix} \tau \\ 1 \end{pmatrix} \subset \C^2.
\]
Since \(\operatorname{rank}_{\Z}(\mathcal{O}) = 4\), \(\Lambda_{\tau}\) is a lattice in \(\C^2\). Let \(A_{\tau} := \C^2 / \Lambda_{\tau}\) denote the complex torus associated with \(\Lambda_{\tau}\). 

The inclusion \(\varphi(\mathcal{O})\Lambda_{\tau} \subset \Lambda_{\tau}\) ensures that the map \(\varphi\) induces an injective ring homomorphism
\[
\iota_{\tau}: \mathcal{O} \longrightarrow \End(A_{\tau}), \quad \alpha \longmapsto \iota_{\tau}(\alpha),
\]
where the action is given by \(\iota_{\tau}(\alpha)(z + \Lambda_{\tau}) := \varphi(\alpha)z + \Lambda_{\tau}\). 

Furthermore, consider the form \(E_{\tau}: \Lambda_{\tau} \times \Lambda_{\tau} \to \Z\) defined by
\[
E_{\tau}\left( x \begin{pmatrix} \tau \\ 1 \end{pmatrix}, y \begin{pmatrix} \tau \\ 1 \end{pmatrix} \right) = \frac{1}{D} \Trd(\varphi(\mu) x \overline{y})
\]
for \(x, y \in \varphi(\mathcal{O})\). The relation \(\Trd(\mu \mathcal{O}) \subset D\Z\) guarantees that \(E_{\tau}\) indeed takes values in \(\Z\) (see \cite[Lemma 43.6.7]{Voight}). 

\begin{lema}\label{lem:properties_E_tau}
The form \(E = E_{\tau}\) or $E= -E_{\tau}$ is a non-degenerate Riemann form that induces a principal polarization on \(A_{\tau}\). Moreover, the homomorphism \(\iota_{\tau}: \mathcal{O} \to \End(A_{\tau})\otimes_{\mathbb{Z}} \mathbb{Q}\) respects involutions, and \(E\) is the unique principal polarization on \(A_{\tau}\) compatible with \(\iota_{\tau}\).
\end{lema}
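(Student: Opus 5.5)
The plan is to verify the Riemann-form axioms for $E_\tau$ directly on $\Lambda_\tau$, using the identification $\Lambda_\tau \cong \mathcal{O}$ given by $x \mapsto \varphi(x)\binom{\tau}{1}$. This identification is a $\Z$-module isomorphism: since $\tau\notin\R$ and $\varphi(B)\otimes\R = M_2(\R)$, the map $\varphi(B)\otimes\R\to\C^2$, $X\mapsto X\binom{\tau}{1}$, is an $\R$-linear isomorphism of 4-dimensional spaces. Under this identification we may regard $E_\tau$ as the pairing $(x,y)\mapsto \frac1D\Trd(\mu x\bar y)$ on $\mathcal{O}$.

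The argument then proceeds in four steps.

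\textbf{Alternating and integral.} From $\overline{\mu}=-\mu$ and $\Trd(\overline{w})=\Trd(w)$ we get
\[
\Trd(\mu y\bar x)=\Trd(x\bar y\,\overline{\mu})=-\Trd(x\bar y\mu)=-\Trd(\mu x\bar y),
\]
so the form is alternating. Integrality is the cited fact $\Trd(\mu\mathcal{O})\subset D\Z$.

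\textbf{Principal.} Nondegeneracy and unimodularity amount to the statement that the dual of $\mathcal{O}$ under the pairing $(x,y)\mapsto\Trd(x\bar y)$ is the inverse different $\mathcal{O}^\sharp$. For a maximal order this dual has $[\mathcal{O}^\sharp:\mathcal{O}]=D^2$, and $\mathcal{O}^\sharp=\mu^{-1}\mathcal{O}$ because $\mu$ generates the two-sided ideal of reduced norm $D$ (as $\Nrd(\mu)=D$). It follows that the form $\frac1D\Trd(\mu\,\cdot\,\bar\cdot)$ has determinant $\pm1$, i.e.\ it is principal.

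\textbf{Riemann condition.} The complex structure on $\Lambda_\tau\otimes\R=M_2(\R)\binom{\tau}{1}$ is multiplication by $i$ on $\C^2$. Pulled back to $M_2(\R)$, it is right multiplication by the matrix $J_\tau\in SL_2(\R)$ fixing $\tau$, i.e.\ $J_\tau\binom{\tau}{1}=i\binom{\tau}{1}$ and $J_\tau^2=-1$. We must check two things:
\begin{itemize}
\item $E(xJ_\tau,yJ_\tau)=E(x,y)$. This follows from $\overline{J_\tau}=J_\tau^{-1}$ and the cyclicity of the trace.
\item The form $(x,y)\mapsto E(x,yJ_\tau)$ is symmetric and definite. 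Since $\bar J_\tau=-J_\tau$, we have $E(x,xJ_\tau)=-\frac1D\Trd(\mu xJ_\tau\bar x)$. Both $\mu$ and $xJ_\tau\bar x$ are trace-zero elements, and $\Nrd(xJ_\tau\bar x)=\Nrd(x)^2>0$. The sign of the resulting quantity is locally constant in $x\in GL_2(\R)$ and flips when passing between the two components $\det x>0$ and $\det x<0$, so an additional computation is needed there. To handle the constancy in $\tau$, the cleanest route is to conjugate $\varphi(\mu)$ to $\sqrt D\begin{pmatrix}0&-1\\1&0\end{pmatrix}$, after which the form becomes the standard $\frac{1}{\sqrt D}$-scaled symplectic form.
\end{itemize}
The outcome is definiteness of constant sign on all of $\mathcal{H}$, which is why the statement reads ``$E_\tau$ or $-E_\tau$''. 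This sign bookkeeping is the main obstacle. I would handle it by computing at the single point $\tau=i$ after the conjugation above and then arguing by continuity and connectedness of $\mathcal{H}$.

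\textbf{Compatibility and uniqueness.} For the Rosati involution $\varrho$ we need $E(\alpha x,y)=E(x,\varsigma(\alpha)y)$. With $\varsigma(\alpha)=\mu^{-1}\bar\alpha\mu$, this reads
\[
\Trd(\mu\alpha x\bar y)=\Trd(\mu x\bar y\,\bar\mu\,\bar\alpha\,\overline{\mu^{-1}}),
\]
which can be checked by direct manipulation using $\bar\mu=-\mu$ and cyclicity. For uniqueness, any other principal polarization $E'$ compatible with $\iota_\tau$ defines an element $\lambda^{-1}\lambda'$ that commutes with $\iota_\tau(\mathcal{O})$ and is fixed by Rosati. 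For generic $\tau$ this lies in $\Z$. In the CM case it lies in an imaginary quadratic order on which Rosati acts as complex conjugation, so a Rosati-fixed element is again rational. Positivity and the requirement of degree one then force the scalar to be $1$.
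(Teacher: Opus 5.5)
\textbf{There is a genuine gap in the positivity half of the Riemann condition, which is the heart of the lemma.}

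Your claim that the sign of $E(x,xJ_\tau)=-\frac1D\Trd(\varphi(\mu)xJ_\tau\bar x)$ flips between $\det x>0$ and $\det x<0$ is false. For invertible $x$ you have $xJ_\tau\bar x=\Nrd(x)\,xJ_\tau x^{-1}$. Conjugating by a matrix with $\det x<0$ swaps the two sheets of $\{w:\Trd w=0,\ \Nrd w>0\}$, and the factor $\Nrd(x)<0$ swaps them back. For instance, $x=\mathrm{diag}(1,-1)$ gives $xJ_i\bar x=J_i$. Worse, if the sign really did flip, $x\mapsto E(x,xJ_\tau)$ would be indefinite, and neither $E_\tau$ nor $-E_\tau$ would be a Riemann form. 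So as written, the proposal argues against the statement.

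Several further pieces are missing:
\begin{enumerate}
\item The ``additional computation'' you defer is never supplied.
\item Singular $x$ are not treated. There $xJ_\tau\bar x$ is a nonzero null vector, and your norm argument says nothing.
\item The fallback via continuity in $\tau$ does not close the gap. Continuity and connectedness of $\mathcal{H}$ only transport the \emph{signature} of the nondegenerate symmetric form $E(x,yJ_\tau)$, so you would still need a full definiteness computation at $\tau=i$.
\item Conjugating $\varphi(\mu)$ to $+\sqrt D J_i$ may require a matrix of negative determinant, which sends $\mathcal{H}$ to the lower half-plane.
\end{enumerate}

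\textbf{The missing computation is short and makes continuity unnecessary.} Write $\tau=g\cdot i$ with $g\in SL_2(\mathbb{R})$. Then $J_\tau=gJ_i\bar g$ with $J_i=\begin{pmatrix}0&-1\\ 1&0\end{pmatrix}$, hence $E(x,xJ_\tau)=Q(xg)$ where $Q(x)=-\frac1D\Trd(\varphi(\mu)xJ_i\bar x)$. Take $x=\begin{pmatrix}a&b\\ c&d\end{pmatrix}$ and $\varphi(\mu)=\begin{pmatrix}\alpha&\beta\\ \gamma&-\alpha\end{pmatrix}$. A direct calculation gives $Q(x)=\frac1D\bigl(F(a,c)+F(b,d)\bigr)$, where $F(u,v)=\gamma u^2-2\alpha uv-\beta v^2$. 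This binary form has determinant $-\alpha^2-\beta\gamma=\Nrd(\mu)=D>0$. So $Q$ is definite of sign $\mathrm{sgn}(\gamma)$ on all of $M_2(\mathbb{R})\setminus\{0\}$, singular $x$ included, and for every $\tau$. The sign of the lower-left entry of $\varphi(\mu)$ is exactly what selects $E_\tau$ or $-E_\tau$.

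\textbf{One smaller slip.} Since $\overline{\varsigma(\alpha)y}=\bar y\,\bar\mu\,\alpha\,\overline{\mu^{-1}}$, the $\bar\alpha$ in your Rosati display should be $\alpha$. As written the identity is false; corrected, it follows from $\bar\mu=-\mu$ and cyclicity of the trace.

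Your other steps are sound: alternation, integrality, unimodularity from $\mathcal{O}^\sharp=\mu^{-1}\mathcal{O}$, and uniqueness via the Rosati-fixed element $\lambda^{-1}\lambda'\in\End(A_\tau,\iota_\tau)$. With positivity repaired, they give a self-contained substitute for the paper's proof, which consists only of a citation of Voight's Lemmas 43.6.16, 43.6.22 and 43.6.23.
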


\begin{proof}
See \cite[Lemmas 43.6.16, 43.6.22, and 43.6.23]{Voight}.
\end{proof}

\begin{proposicion}\label{prop:moduli_isomorphism}
Every principally polarized complex abelian surface with QM by \((\mathcal{O}, \mu)\) is isomorphic to \((A_{\tau}, \iota_{\tau})\) for some \(\tau \in \mathcal{H}\).
\end{proposicion}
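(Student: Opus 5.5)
The plan is the standard Riemann-form argument for complex tori with quaternionic multiplication. We show that any $(A,\iota)$ with QM by $(\mathcal{O},\mu)$ has a period lattice of the form $\Lambda_\tau$ in suitable coordinates, and that the polarization is then forced by Lemma \ref{lem:properties_E_tau}.

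Write $A=\C^2/\Lambda$. Let $\rho_a:\End(A)\to M_2(\C)$ be the analytic representation. Then $B\otimes_\Q\R\cong M_2(\R)$ acts on $\Lambda\otimes\R=\C^2$ through $\rho_a\circ\iota$. This action is $\C$-linear, and since $\C^2$ has real dimension $4$ it is a simple $M_2(\R)$-module of rank one. Complexifying, $\rho_a\circ\iota:B\otimes\C\cong M_2(\C)\to M_2(\C)$ is a $\C$-algebra homomorphism. By Skolem--Noether it is conjugate to $\varphi$. After a $\C$-linear change of coordinates on $\C^2$, which only replaces $A$ by an isomorphic torus, we may therefore assume $\rho_a(\iota(\alpha))=\varphi(\alpha)$ for all $\alpha\in\mathcal{O}$.

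Next, $\Lambda\otimes\Q$ is a left $B$-module of $\Q$-dimension $4$, hence free of rank one. Choose $v\in\Lambda\otimes\Q$ with $\Lambda\otimes\Q=Bv$. Then $\Lambda=Iv$ for a lattice $I\subset B$ that is a left $\mathcal{O}$-module. Because $\mathcal{O}$ is maximal and $B$ is indefinite with $D>1$, the Eichler class number is one, so $I=\mathcal{O}\beta$ is principal. Replacing $v$ by $\beta v$ gives $\Lambda=\varphi(\mathcal{O})v$ with $v=(v_1,v_2)^t\in\C^2$.

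Now normalize $v$. The vectors $\varphi(\alpha)v$ span $\C^2$ over $\R$. If $v_2=0$, the $\R$-span of $\varphi(\mathcal{O})v$ would be $M_2(\R)v$, which is only $\R$-spanned by real multiples in a degenerate way, so one checks $v_2\neq 0$. Scaling by the scalar $v_2^{-1}$, which commutes with $\varphi(\mathcal{O})$, gives $v=(\tau,1)^t$. Since $\varphi(\mathcal{O})(\tau,1)^t$ is a lattice, $\tau\notin\R$. If $\tau$ lies in the lower half-plane, I would conjugate by an element of $\varphi(B^\times)$ of negative reduced norm, such as $\varphi(j)$ when $b<0$, or use $\varphi(\mu)$ suitably, to move $\tau$ into $\mathcal{H}$ while keeping $\Lambda=\varphi(\mathcal{O})(\tau,1)^t$ up to isomorphism. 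An element of negative norm normalizing $\mathcal{O}$ exists because $\mu$ has $\Nrd(\mu)=D>0$ but $\mathcal{O}$ contains units of norm $-1$ for maximal orders in indefinite algebras. This identifies $(A,\iota)\cong(A_\tau,\iota_\tau)$ as tori with QM.

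Finally, the polarizations must agree. The given principal polarization of $A$ is compatible with $\iota$, since the Rosati involution restricts to $\varsigma$. By the uniqueness statement of Lemma \ref{lem:properties_E_tau}, it equals the principal polarization induced by $\pm E_\tau$. Hence the isomorphism respects polarizations and QM structures.

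The main obstacle is the step $\Lambda=\varphi(\mathcal{O})v$. It needs the principality of left $\mathcal{O}$-ideals (Eichler's theorem), and it needs the unit of norm $-1$ to handle the orientation of $\tau$. For both I would simply cite \cite{Voight}, Chapter 43.
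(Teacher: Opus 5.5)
Your argument is correct, and it gives an actual proof where the paper only cites one: the paper's proof of this proposition is just the reference to Voight, Proposition~43.6.28. You reconstruct the standard argument behind that reference:
the period lattice is $\Lambda=Iv$ with $I$ an invertible left $\mathcal{O}$-ideal, hence $\Lambda=\varphi(\mathcal{O})v$ by class number one; Skolem--Noether on the $\C$-linear extensions $B\otimes\C\to M_2(\C)$ lets you assume the analytic representation of $\iota$ is $\varphi$; scaling by $v_2^{-1}$ gives $(\tau,1)^t$; a unit of reduced norm $-1$ fixes the half-plane; and the uniqueness in Lemma~\ref{lem:properties_E_tau} handles the polarization. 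This makes explicit that the only arithmetic inputs are Eichler's theorems for indefinite $B$ over $\Q$: every left $\mathcal{O}$-ideal is principal, and $\Nrd(\mathcal{O}^\times)=\{\pm1\}$.

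Several steps should be stated more carefully.

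In the orientation step only your last option works. $\Nrd(j)=-b$, so $\varphi(j)$ reverses orientation when $b>0$ (not $b<0$), and $j\notin\mathcal{O}^\times$ in general. $\Nrd(\mu)=D>0$, so $\varphi(\mu)$ preserves $\mathcal{H}$. An element $\nu$ that merely normalizes $\mathcal{O}$ has $\mathcal{O}\nu\neq\mathcal{O}$ unless $\nu\in\mathcal{O}^\times$, so it only identifies $A_\tau$ with $A_{\nu\cdot\tau}$ after twisting $\iota$ by conjugation.

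Nothing needs to be conjugated. Take $\epsilon\in\mathcal{O}^\times$ with $\Nrd(\epsilon)=-1$, and write $\varphi(\epsilon)(\tau,1)^t=\lambda(\tau',1)^t$ with $\tau'=\varphi(\epsilon)\cdot\tau$ and $\lambda\neq0$ (since $\tau\notin\R$). Because $\mathcal{O}\epsilon=\mathcal{O}$, you get $\varphi(\mathcal{O})(\tau,1)^t=\varphi(\mathcal{O})\varphi(\epsilon)(\tau,1)^t=\lambda\,\varphi(\mathcal{O})(\tau',1)^t$. Moreover $\operatorname{Im}\tau'=-\operatorname{Im}\tau/|\lambda|^2>0$, and multiplication by $\lambda^{-1}$ commutes with $\varphi(\mathcal{O})$, so it is an isomorphism of tori with QM.

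For $v_2\neq0$ and $\tau\notin\R$, the clean reason is this: otherwise $\Lambda=\varphi(\mathcal{O})v$ lies in the real plane $v_1\R^2$ (respectively $v_2\R^2$), so it cannot span $\C^2$ over $\R$.

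Finally, $\C^2$ is not a simple $M_2(\R)$-module; it is free of rank one, a sum of two copies of $\R^2$. That remark plays no role in your argument, so it can simply be dropped.
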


\begin{proof}
See \cite[Proposition 43.6.28]{Voight}.
\end{proof}

Using these results, we establish the bijective correspondence between points on the Shimura curve and isomorphism classes of principally polarized complex abelian surfaces with quaternionic multiplication.

\begin{teorema}\label{interpretacion_moduli}
Let \(\Gamma(\mathcal{O}, 1) = \varphi(\mathcal{O}_+^{\times}) / \{\pm 1\}\). The map
\[
\Theta : \Gamma(\mathcal{O}, 1) \backslash \mathcal{H} \longrightarrow \left\{ \begin{array}{c}
\text{Principally polarized complex abelian} \\ 
\text{surfaces } (A, \iota) \text{ with QM by } (\mathcal{O}, \mu) \\ 
\text{up to isomorphism}
\end{array} \right\}
\]
given by \(\Gamma(\mathcal{O}, 1)\cdot\tau \longmapsto [(A_{\tau}, \iota_{\tau})]\) is a bijection.
\end{teorema}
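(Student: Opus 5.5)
The map $\Theta$ is well defined, surjective and injective, and I will check these three properties in that order.

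\emph{Well-definedness.} For $\gamma\in\mathcal{O}_+^{\times}$, write $\varphi(\gamma)=\begin{pmatrix} a&b\\ c&d\end{pmatrix}$ and $j(\gamma,\tau)=c\tau+d\neq 0$. Then
\[
\begin{pmatrix}\gamma\tau\\1\end{pmatrix}=j(\gamma,\tau)^{-1}\varphi(\gamma)\begin{pmatrix}\tau\\1\end{pmatrix}.
\]
Since $\mathcal{O}\gamma=\mathcal{O}$, this gives
\[
\Lambda_{\gamma\tau}=j(\gamma,\tau)^{-1}\varphi(\mathcal{O})\varphi(\gamma)\begin{pmatrix}\tau\\1\end{pmatrix}=j(\gamma,\tau)^{-1}\Lambda_\tau .
\]
Multiplication by the scalar $j(\gamma,\tau)^{-1}$ is therefore an isomorphism $A_\tau\to A_{\gamma\tau}$. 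It commutes with every $\varphi(\alpha)$, so it respects the QM structures. By the uniqueness part of Lemma \ref{lem:properties_E_tau}, it also carries the principal polarization to the principal polarization. One can instead check this directly: $\Nrd(\gamma)=1$ means $\gamma\bar\gamma=1$, and then
\[
E_{\gamma\tau}(x\gamma v,y\gamma v)=\tfrac1D\Trd(\mu x\gamma\bar\gamma\bar y)=E_\tau(xv,yv).
\]
The sign $\pm1$ acts trivially on $\mathcal{H}$, so the class $[(A_\tau,\iota_\tau)]$ depends only on the orbit $\Gamma(\mathcal{O},1)\tau$.

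\emph{Surjectivity.} This is exactly Proposition \ref{prop:moduli_isomorphism}.

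\emph{Injectivity.} This is the step I expect to be the main obstacle. Suppose $f:(A_{\tau},\iota_\tau)\to(A_{\tau'},\iota_{\tau'})$ is an isomorphism. Its analytic representation is a $\C$-linear map $M$ of $\C^2$ with $M\Lambda_\tau=\Lambda_{\tau'}$ and $M\varphi(\alpha)=\varphi(\alpha)M$ for all $\alpha\in\mathcal{O}$. Since $\varphi(\mathcal{O})$ spans $M_2(\C)$ over $\C$, $M$ is central, so $M=\lambda\in\C^\times$ is a scalar. Then
\[
\lambda\begin{pmatrix}\tau\\1\end{pmatrix}\in\Lambda_{\tau'},
\]
so $\lambda\begin{pmatrix}\tau\\1\end{pmatrix}=\varphi(\beta)\begin{pmatrix}\tau'\\1\end{pmatrix}$ for some $\beta\in\mathcal{O}$. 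From $\lambda\Lambda_\tau=\Lambda_{\tau'}$ we get $\varphi(\mathcal{O}\beta)v'=\varphi(\mathcal{O})v'$, where $v'=\begin{pmatrix}\tau'\\1\end{pmatrix}$. The map $x\mapsto\varphi(x)v'$ is injective on $B$, because $\tau'\notin\R$ and $\{1,i,j,ij\}$ maps to $\R$-independent matrices, so $(x,y)\mapsto\varphi(x)v'$ is an $\R$-isomorphism $B_\R\to\C^2$. Hence $\mathcal{O}\beta=\mathcal{O}$, and $\beta\in\mathcal{O}^\times$.

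It remains to show $\Nrd(\beta)=1$, and here the polarization enters. Compatibility of $f$ with the principal polarizations, which are unique by Lemma \ref{lem:properties_E_tau}, gives
\[
E_{\tau'}(\lambda u,\lambda w)=E_\tau(u,w).
\]
Writing $u=xv$ and $w=yv$ with $v=\begin{pmatrix}\tau\\1\end{pmatrix}$, we have $\lambda u=x\beta v'$, and this yields
\[
\Trd(\mu x\beta\bar\beta\bar y)=\Trd(\mu x\bar y) \qquad\text{for all } x,y\in\mathcal{O}.
\]
This says $\Nrd(\beta)=1$. (The sign ambiguity $\pm E_\tau$ is the same for both points and cancels.) Then $\beta\in\mathcal{O}_+^\times$, and since $\lambda v=\varphi(\beta)v'$, projectivizing gives $\tau=\beta\tau'$. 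So the two orbits coincide.

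The delicate points are the identification of $\Lambda_\tau$ with $\mathcal{O}$ via $x\mapsto\varphi(x)v$, and the bookkeeping with $\pm E$. A norm $-1$ unit would act by $\varphi(\beta)$ with negative determinant and send $\tau'$ into the lower half-plane, which is another way to see why the norm must be $+1$.
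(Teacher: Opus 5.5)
The paper gives no argument of its own for this statement; it simply cites Voight, Theorem 43.6.14. Your proposal is a correct, self-contained proof along the standard lattice-theoretic lines, using only Lemma \ref{lem:properties_E_tau} and Proposition \ref{prop:moduli_isomorphism}. Well-definedness comes from $\Lambda_{\gamma\tau}=j(\gamma,\tau)^{-1}\Lambda_\tau$, and surjectivity is the proposition. Injectivity comes from the fact that an isomorphism respecting the QM structures is a scalar $\lambda$ with $\lambda v=\varphi(\beta)v'$ and $\cO\beta=\cO$. The citation buys brevity and leaves the delicate facts about the Riemann form to Voight. Your argument instead shows explicitly where the norm-one condition defining $\Gamma(\cO,1)$ comes from.

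Two places need tightening.

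\emph{Well-definedness check.} In your direct check the roles of $E_\tau$ and $E_{\gamma\tau}$ are swapped. The scalar map sends $x\gamma v\in\Lambda_\tau$ to $j(\gamma,\tau)^{-1}x\gamma v=x\,v_{\gamma\tau}$, where $v_{\gamma\tau}=(\gamma\tau,1)^{T}$. So the identity to verify is $E_{\gamma\tau}(x v_{\gamma\tau},y v_{\gamma\tau})=\frac1D\Trd(\mu x\bar y)=\frac1D\Trd(\mu x\gamma\bar\gamma\bar y)=E_\tau(x\gamma v,y\gamma v)$. The key input $\gamma\bar\gamma=1$ is the right one, and this step is covered by the uniqueness lemma anyway.

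\emph{Sign of the polarization in the injectivity step.} This is the more important point. Your parenthetical claim that the sign in $\pm E_\tau$ is the same at $\tau$ and at $\tau'$ is not contained in Lemma \ref{lem:properties_E_tau} as stated, yet it carries the whole weight. If the polarizations are $sE_\tau$ and $s'E_{\tau'}$ with $s,s'\in\{\pm1\}$, your trace identity only gives $\Nrd(\beta)=ss'$.

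The claim is true. The symmetric form $E_\tau(\sqrt{-1}\,\cdot,\cdot)$, pulled back to the fixed space $B_\R$ via $x\mapsto\varphi(x)v$, is definite for every $\tau$ and depends continuously on $\tau$. Hence its sign is constant on the connected set $\mathcal{H}$.

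It is simpler, though, to promote your last remark to the main argument:
\begin{enumerate}[label=(\roman*)]
\item Since $\beta\in\cO^\times$, you have $\Nrd(\beta)=\pm1$.
\item Comparing coordinates in $\lambda v=\varphi(\beta)v'$ gives $\tau=\varphi(\beta)\cdot\tau'$ with $\operatorname{Im}\tau=\Nrd(\beta)\operatorname{Im}\tau'/|\lambda|^2$.
\item Since $\tau,\tau'\in\mathcal{H}$, this forces $\Nrd(\beta)=1$.
\end{enumerate}
This shows that injectivity needs no polarization bookkeeping at all. Compatibility with the principal polarizations then follows automatically from the uniqueness statement in Lemma \ref{lem:properties_E_tau}.
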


\begin{proof}
See \cite[Theorem 43.6.14]{Voight}.
\end{proof}

\subsection{Optimal homomorphisms and CM points}

Let \(B\) be a quaternion algebra over \(\mathbb{Q}\) of reduced discriminant $D>1$, and let \(\mathcal{O}\) be a maximal order in \(B\). Let \(K\) be a quadratic field and \(R\) an order in \(K\). An injective ring homomorphism \(\phi: R \to \mathcal{O}\) induces, by extension of scalars, an injective \(\mathbb{Q}\)-algebra homomorphism \(\phi: K \to B\).

\begin{definicion}
An injective homomorphism \(\phi: R \to \mathcal{O}\) is said to be \textbf{optimal} if \(\phi(K) \cap \mathcal{O} = \phi(R)\).
\end{definicion}

\begin{observacion}
The optimality condition is equivalent to \(R = \phi^{-1}(\phi(K) \cap \mathcal{O})\). Moreover, if \(R\) is the ring of integers of \(K\), then every injective homomorphism \(\phi: R \to \mathcal{O}\) is optimal.
\end{observacion}

\begin{definicion}
Let $a\in \mathbb{Z}$ and let $p$ be a prime number. The \textbf{Kronecker symbol} $\left(\dfrac{a}{p}\right)$ is defined as follows: for an odd prime $p$,
$$ \left( \dfrac{a}{p}\right) = \left\{ \begin{array}{cl}
             1  & \text{if $a$ is a nonzero square modulo $p$,} \\
             0  & \text{if $p \mid a$,} \\
             -1 & \text{if $a$ is not a square modulo $p$,}
             \end{array}
   \right.$$
and for $p=2$,  
$$ \left( \dfrac{a}{2}\right) = \left\{ \begin{array}{cl}
             1  & \text{if $a\equiv \pm 1 \pmod{8}$,} \\
             0 & \text{if $2 \mid a$,} \\
             -1 & \text{if $a\equiv \pm 5 \pmod{8}$.}
             \end{array}
   \right.$$ 
\end{definicion}

\begin{lema}\label{lema_2}
Let $d$ be the discriminant of $K$. Then there exists a homomorphism of $K$ into $B$ if and only if for every prime $p$ such that $p \mid D$, we have 
$$\left( \dfrac{d}{p}\right)\neq 1.$$  
\end{lema}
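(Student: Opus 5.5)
The plan is to reduce the embedding question to a splitting criterion and then verify that criterion prime by prime via local field theory. First we use the classical fact that a quadratic field $K$ embeds in a quaternion algebra $B$ over $\mathbb{Q}$ if and only if $K$ splits $B$, i.e.\ $B\otimes_{\mathbb{Q}} K\cong M_2(K)$ (see \cite[Lemma 5.4.7]{Voight}). By the local--global principle for quaternion algebras over $K$ (Hasse--Brauer--Noether, the analogue of Proposition \ref{propo1}(2) over number fields), $B\otimes K$ is split if and only if $B_v\otimes_{\mathbb{Q}_v} K_w$ is split for every place $v$ of $\mathbb{Q}$ and every place $w\mid v$ of $K$. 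This reduces the lemma to a purely local computation at each place of $\mathbb{Q}$.

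At places $v$ where $B$ is unramified, $B_v\cong M_2(\mathbb{Q}_v)$, so $B_v\otimes K_w$ is automatically split. The infinite place needs no condition either: $B$ is indefinite by the standing hypotheses of this subsection, so $B_\infty\cong M_2(\mathbb{R})$, and in any case tensoring with $\mathbb{C}$ splits everything. So the only constraints come from the primes $p\mid D$, where $B_p$ is the unique quaternion division algebra over $\mathbb{Q}_p$.

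For a prime $p\mid D$, the key local fact is that the division algebra $B_p$ is split by a quadratic extension $L/\mathbb{Q}_p$, and is split by no other étale quadratic algebra. In particular, if $K_p:=K\otimes\mathbb{Q}_p\cong\mathbb{Q}_p\times\mathbb{Q}_p$, then $B_p\otimes K_p\cong B_p\times B_p$ is not split. If instead $K_p$ is a field, it is a quadratic extension of $\mathbb{Q}_p$, and it splits $B_p$: it has local degree $2$, so the Hasse invariant $1/2$ becomes $2\cdot\tfrac12=0$. Equivalently, every quadratic field extension of $\mathbb{Q}_p$ embeds in $B_p$, which can be seen through the norm form and the representation of $B_p$ as a cyclic algebra.

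It remains to translate ``$p$ splits in $K$'' into the Kronecker symbol. By the standard decomposition law in quadratic fields, $p$ splits in $K$ exactly when $\left(\frac{d}{p}\right)=1$; it is inert when the symbol is $-1$ and ramified when it is $0$. For $p=2$ the conventions in the definition above ($d\equiv\pm1 \pmod 8$, i.e.\ $d\equiv1\pmod 8$ for a fundamental discriminant) match this law. Combining the steps, $K$ embeds in $B$ if and only if no $p\mid D$ splits in $K$, that is, $\left(\frac{d}{p}\right)\neq1$ for all $p\mid D$.

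The main obstacle is the local statement that every quadratic extension of $\mathbb{Q}_p$ splits the division algebra $B_p$, together with the global criterion relating embeddings of $K$ to splitting. I would cite these from \cite{Voight} (Chapters 13--14) rather than reprove them, since the rest of the argument is bookkeeping.
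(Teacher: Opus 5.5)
Your argument is correct. The paper gives no proof of its own and simply cites \cite[Lemma 4.3]{Alsina-Bayer}. Your chain is the standard argument behind that reference: an embedding exists if and only if $K$ splits $B$, the Hasse principle over $K$ reduces this to local splitting, the local invariant at $p\mid D$ forces $K_p$ to be a field, and the decomposition law matches the Kronecker symbol, including the $p=2$ convention. Your remark that the infinite place imposes no condition because $B$ is indefinite (or $K$ imaginary) is a hypothesis the paper leaves implicit. One wording fix: ``split by a quadratic extension $L/\mathbb{Q}_p$'' should read ``split by every quadratic field extension $L/\mathbb{Q}_p$''.
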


\begin{proof}
See \cite[Lemma 4.3]{Alsina-Bayer}.
\end{proof}

\begin{proposicion}
Suppose that there exists a homomorphism \(\phi: K \to B\). Let \(\varphi: B \to M_2(\mathbb{R})\) be the homomorphism defined in \eqref{homoiny1}. Then, all transformations $\gamma \neq id$ in \(\varphi(\phi(K^{\times}))\) have the same fixed point. In particular, if $K$ is an imaginary quadratic field, then they have a unique fixed point in $\mathcal{H}$.
\end{proposicion}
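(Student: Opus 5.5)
The plan is to write $K=\mathbb{Q}(\sqrt{d})$ and set $\beta=\phi(\sqrt{d})\in B$, so that $\beta\notin\mathbb{Q}$, $\beta^2=d$ and $\operatorname{Trd}(\beta)=0$. Every element of $\phi(K^{\times})$ then has the form $u+v\beta$ with $u,v\in\mathbb{Q}$, not both zero. Its image $\varphi(u+v\beta)=uI+v\varphi(\beta)$ acts on $\mathbb{P}^1(\mathbb{C})$ by a Möbius transformation, and this is the identity exactly when $v=0$, since $\varphi(\beta)$ is not scalar by injectivity of $\varphi$. For $v\neq 0$, a point $w$ is fixed by $uI+v\varphi(\beta)$ if and only if $(w,1)^T$ is an eigenvector of $uI+v\varphi(\beta)$, which happens if and only if it is an eigenvector of $\varphi(\beta)$. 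So every nontrivial transformation in $\varphi(\phi(K^{\times}))$ has the same fixed point set as $\varphi(\beta)$, which proves the first claim.

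To describe this fixed set, write $\beta=yi+zj+tij$; there is no scalar part because the trace vanishes. By \eqref{homoiny1},
\[
\varphi(\beta)=\begin{pmatrix} y\sqrt{a} & z+t\sqrt{a}\\ b(z-t\sqrt{a}) & -y\sqrt{a}\end{pmatrix}.
\]
The fixed point equation $w=\frac{y\sqrt a\, w+(z+t\sqrt a)}{b(z-t\sqrt a)w-y\sqrt a}$ becomes the quadratic
\[
b(z-t\sqrt a)w^2-2y\sqrt a\, w-(z+t\sqrt a)=0 .
\]
Its discriminant is $4\bigl(ay^2+b(z^2-at^2)\bigr)=4(-\operatorname{Nrd}(\beta))=4\beta^2=4d$. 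This uses $\operatorname{Nrd}(yi+zj+tij)=-ay^2-bz^2+abt^2$ together with $\beta^2=-\operatorname{Nrd}(\beta)$ for pure quaternions.

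Now assume $K$ is imaginary, so $d<0$. First I would rule out $c:=b(z-t\sqrt a)=0$. In that case the quadratic degenerates, and $\varphi(\beta)$ would be upper triangular with real diagonal entries $\pm y\sqrt a$. Its eigenvalues would then be real, but they are $\pm\sqrt{d}$, which are not real; this is a contradiction. So the equation is a genuine real quadratic with negative discriminant, and its two roots
\[
w_\pm=\frac{y\sqrt a\pm\sqrt{d}}{b(z-t\sqrt a)}
\]
are complex conjugates and non-real. Exactly one of them lies in $\mathcal{H}$, and that one is the unique common fixed point in $\mathcal{H}$.

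The main obstacle is the degenerate case $c=0$. Handling it through the eigenvalues of $\varphi(\beta)$, as above, avoids any dependence on the particular shape of $a$ and $b$, such as $a=D$, $b=q$ from Remark \ref{obs1}.
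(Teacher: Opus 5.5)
Your proof is correct. The paper does not argue this statement itself; it only cites \cite[Corollary 6.2]{Alsina-Bayer}, so your proposal is a self-contained substitute for that citation rather than a variant of an argument in the text. Your key moves use nothing beyond \eqref{homoiny1}. First, you write every element of $\phi(K^{\times})$ as $u+v\beta$, so that for $v\neq 0$ the matrices $uI+v\varphi(\beta)$ and $\varphi(\beta)$ have the same eigenvectors and hence the same fixed points. Second, you solve the fixed-point quadratic, whose discriminant is $4(ay^2+bz^2-abt^2)=4\beta^2=4d$. The citation buys brevity. Your route instead produces, in the same stroke, the explicit formula $\tau=\frac{y\sqrt{a}\pm\sqrt{d}}{b(z-t\sqrt{a})}$, which the paper derives separately in Proposition \ref{prop_tau}. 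There it goes through $\varphi(\phi(\tau_R))=\frac{fd}{2}I+\frac{f}{2}\varphi(\beta)$, which by your first step has the same fixed points as $\varphi(\beta)$. Your eigenvalue argument showing $b(z-t\sqrt{a})\neq 0$ when $d<0$ also justifies a division that the proof of Proposition \ref{prop_tau} performs without comment. The only cosmetic gap is that your eigenvector criterion is stated for finite $w$; the point $\infty$ is handled identically with the vector $(1,0)^T$.
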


\begin{proof}
See \cite[Corollary 6.2]{Alsina-Bayer}.
\end{proof}

\begin{definicion}
Let $X(D,1) = \Gamma(\mathcal{O},1)\backslash \mathcal{H}$ be the Shimura curve associated with $\Gamma(\mathcal{O},1)$. Let $K$ be an imaginary quadratic field for which there exists a homomorphism of $K$ into $B$, and let $R \subset K$ be an order. A point $\Gamma(\mathcal{O},1)\cdot \tau \in X(D,1)$ is called a \textbf{CM point by $R$} if there exists an optimal homomorphism $\phi: R \hookrightarrow \mathcal{O}$ such that $\tau$ is the unique fixed point of \(\varphi(v)\) in $\mathcal{H}$ for every \(v \in \phi(K^{\times})\).
\end{definicion}

\begin{lema}\label{lema_end}
Let $P=[(A, \iota)]\in X(D,1)$ be a CM point by $R$, and let $\tau \in \mathcal{H}$ be the point corresponding to $P$. Let $\phi: R\hookrightarrow \mathcal{O}$ be an optimal homomorphism such that $\tau$ is the unique fixed point of $\varphi(v)$ in $\mathcal{H}$ for every $v\in \phi(K^{\times})$. Then $\operatorname{End}(A, \iota) \cong R$.
\end{lema}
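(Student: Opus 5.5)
By Theorem \ref{interpretacion_moduli} we may take $(A,\iota)=(A_\tau,\iota_\tau)$. The plan is to show that $\operatorname{End}(A_\tau,\iota_\tau)$ is isomorphic to the set of elements of $B$ that commute with $\varphi(\mathcal{O})$ on one side and preserve $\Lambda_\tau$. We then identify this set with $\phi(K)\cap\mathcal{O}$ and conclude by optimality of $\phi$.

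\emph{Step 1: describe endomorphisms as matrices.} An endomorphism $f$ of $A_\tau$ lifts to a $\C$-linear map $M\in M_2(\C)$ with $M\Lambda_\tau\subset\Lambda_\tau$. Commuting with $\iota_\tau(\mathcal{O})$ means $M\varphi(\alpha)=\varphi(\alpha)M$ for all $\alpha\in\mathcal{O}$, hence for all of $\varphi(B)$. Since $\varphi(B)\otimes_{\mathbb Q}\mathbb{R}=M_2(\mathbb{R})$, the $\C$-span of $\varphi(B)$ is $M_2(\C)$, and so $M$ is a scalar $\lambda\in\C$. Thus $\operatorname{End}(A_\tau,\iota_\tau)\cong\{\lambda\in\C:\lambda\Lambda_\tau\subset\Lambda_\tau\}$. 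This is a commutative ring, and the map $f\mapsto\lambda$ is injective.

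\emph{Step 2: translate lattice stability into $B$.} Write $v_\tau=\binom{\tau}{1}$. The point $\tau$ is fixed by $\varphi(\beta)$ for $\beta\in\phi(K^\times)$, so $\varphi(\beta)v_\tau=\lambda_\beta v_\tau$ for some $\lambda_\beta\in\C$. This gives a ring map $\phi(K)\to\C$, $\beta\mapsto\lambda_\beta$, which is an embedding of $K$ (for $\beta=\phi(\sqrt{d})$ one gets $\lambda_\beta=\pm\sqrt d$). Conversely, suppose $\lambda\Lambda_\tau\subset\Lambda_\tau$. Then $\lambda v_\tau=\varphi(\beta)v_\tau$ for a unique $\beta\in\mathcal{O}$; uniqueness holds because $x\mapsto\varphi(x)v_\tau$ is injective on $B$, as $\Lambda_\tau$ has rank $4$. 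Hence $\tau$ is a fixed point of $\varphi(\beta)$, and $\beta$ commutes with $\phi(K)$. I claim $\beta\in\phi(K)$. If $\beta\notin\mathbb{Q}$, then $\varphi(\beta)$ has $\tau\in\mathcal H$ as a fixed point, so $\beta$ is elliptic and generates an imaginary quadratic subfield $\mathbb{Q}(\beta)$. The centralizer of $\phi(K)$ in $B$ is $\phi(K)$ itself, since $\phi(K)$ is a maximal commutative subalgebra; therefore $\beta\in\phi(K)$.

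\emph{Step 3: compute the endomorphism ring.} The same argument as in Step 2 shows that, for $\beta\in\phi(K)$, the condition $\lambda_\beta\Lambda_\tau\subset\Lambda_\tau$ is equivalent to $\beta\in\mathcal{O}$. Indeed, $\lambda_\beta\varphi(x)v_\tau=\varphi(x)\varphi(\beta)v_\tau=\varphi(x\beta)v_\tau$, and this lies in $\Lambda_\tau$ for all $x\in\mathcal{O}$ exactly when $x\beta\in\mathcal{O}$ for all $x\in\mathcal{O}$, that is, exactly when $\beta\in\mathcal{O}$. Therefore $\operatorname{End}(A,\iota)\cong\phi(K)\cap\mathcal{O}=\phi(R)\cong R$ by optimality.

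The main obstacle is Step 2: proving that any $\lambda$ stabilizing $\Lambda_\tau$ comes from $\phi(K)$. The key points there are that $\beta$ fixes $\tau$, and that the fixed point in $\mathcal H$ determines the quadratic subfield. The latter follows because all elements of $B$ fixing $\tau$ commute with one another; this is the uniqueness part of the proposition quoted from \cite{Alsina-Bayer}, run in reverse. One must also handle the possible discrepancy between $\varphi(\beta)$ acting on $v_\tau$ and $\varphi(\beta)$ acting on $\bar v_\tau$. This is dealt with by choosing the sign of $\sqrt d$ so that $\lambda_\beta$ is the eigenvalue on $v_\tau$ (rather than on $\bar v_\tau$).
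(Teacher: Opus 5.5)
Your outline matches the paper's proof. You lift an endomorphism to $M\in M_2(\mathbb{C})$ and use that it centralizes $\varphi(\mathcal{O})$ to make it a scalar $\lambda$. You then write $\lambda v_\tau=\varphi(\beta)v_\tau$ with $\beta\in\mathcal{O}$ and show $\beta$ commutes with $\phi(K)$. That gives $\beta\in C_B(\phi(K))\cap\mathcal{O}=\phi(K)\cap\mathcal{O}=\phi(R)$, and finally you check the converse. Your Step 3 is more careful than the paper on two points: the converse inclusion, and why $\beta\mapsto\lambda_\beta$ gives a ring isomorphism. The paper only says ``$\lambda_\theta=\beta$ or $\bar\beta$''.

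\textbf{The gap.} The unproved step is the one you flag yourself: that $\beta$ commutes with $\phi(K)$, i.e.\ that all elements of $B$ fixing $\tau$ commute. Citing the Alsina--Bayer proposition ``run in reverse'' does not prove it. That proposition only says the nontrivial elements of one embedded field $\phi(K)$ share a fixed point. The statement you need is a converse: an arbitrary element of $B$ with eigenvector $v_\tau$ must commute with $\phi(K)$. A converse is not obtained by reversing a statement, and it needs its own proof. For the same reason, knowing that $\mathbb{Q}(\beta)$ is an imaginary quadratic field fixing $\tau$ does not by itself identify it with $\phi(K)$.

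\textbf{The fix.} The missing idea, which is exactly what the paper uses, is complex conjugation. The matrices $\varphi(\beta)$ and $\varphi(\phi(\omega))$ are real, so if $v_\tau$ is an eigenvector of each, then so is $\bar v_\tau$. Since $\tau\notin\mathbb{R}$, the set $\{v_\tau,\bar v_\tau\}$ is a basis of $\mathbb{C}^2$. Both matrices are diagonal in this basis, hence they commute, and injectivity of $\varphi$ gives $\beta\phi(\omega)=\phi(\omega)\beta$. With this inserted your argument is complete. The detour through ``$\beta$ is elliptic and generates an imaginary quadratic field'' then becomes unnecessary, since $C_B(\phi(K))=\phi(K)$ handles every $\beta$, rational or not.
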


\begin{proof}
By Theorem \ref{interpretacion_moduli}, $A \cong \mathbb{C}^2/\Lambda_{\tau}$ with $\Lambda_{\tau} = \varphi(\mathcal{O}) \begin{pmatrix} \tau \\ 1 \end{pmatrix}$, where $\varphi(\mathcal{O})\Lambda_{\tau} \subset \Lambda_{\tau}$ and $$\iota(\alpha)(z+ \Lambda_{\tau}) = \varphi(\alpha)z +\Lambda_{\tau}$$ for all $z \in \mathbb{C}^2$ and $\alpha \in \mathcal{O}$. Let $\theta \in \operatorname{End}(A, \iota)$, then $\theta \in \operatorname{End}(A)$ such that $\theta \circ \iota(\alpha) = \iota(\alpha) \circ \theta$ for all $\alpha \in \mathcal{O}$. Thus, there exists a matrix $M_{\theta} \in M_{2}(\mathbb{C})$ such that $M_{\theta}\Lambda_{\tau} \subset \Lambda_{\tau}$ and $\theta(z+\Lambda_{\tau}) = M_{\theta}z +\Lambda_{\tau}$ for all $z \in \mathbb{C}^2$. 

Furthermore, $$(\theta \circ \iota(\alpha))(z+\Lambda_{\tau}) = \theta(\varphi(\alpha)z +\Lambda_{\tau}) = M_{\theta}\varphi(\alpha)z +\Lambda_{\tau},$$ and $$(\iota(\alpha) \circ \theta)(z+\Lambda_{\tau}) = \iota(\alpha)(M_{\theta}z +\Lambda_{\tau}) = \varphi(\alpha)M_{\theta}z +\Lambda_{\tau},$$
for all $z\in \mathbb{C}^2$ and $\alpha\in \mathcal{O}$.
This implies that $M_{\theta}\varphi(\alpha)z +\Lambda_{\tau} = \varphi(\alpha)M_{\theta}z +\Lambda_{\tau}$ for all $z \in \mathbb{C}^2$. Defining $N := M_{\theta}\varphi(\alpha) - \varphi(\alpha)M_{\theta} \in M_{2}(\mathbb{C})$, we obtain that $Nz \in \Lambda_{\tau}$ for all $z \in \mathbb{C}^2$, which means $\operatorname{Im}(N) \subset \Lambda_{\tau}$. Since $\Lambda_{\tau}$ is discrete, we must have $\operatorname{Im}(N) = \{0\}$, so $N = 0$. Therefore, $M_{\theta}\varphi(\alpha) = \varphi(\alpha)M_{\theta}$ for all $\alpha \in \mathcal{O}$.

Since the centralizer $C_{M_2(\mathbb{C})}(M_2(\mathbb{R})) = \mathbb{C}I_2$ and $\varphi(\mathcal{O})$ contains an $\mathbb{R}$-basis of $M_2(\mathbb{R})$, we deduce that $M_{\theta} = \lambda_{\theta}I_2$ for some $\lambda_{\theta} \in \mathbb{C}$. 

Given that $M_{\theta}\Lambda_{\tau} \subset \Lambda_{\tau}$, we have $\lambda_{\theta}\begin{pmatrix} \tau \\ 1 \end{pmatrix} \in \Lambda_{\tau}$. Thus, there exists $\alpha \in \mathcal{O}$ such that $\lambda_{\theta}\begin{pmatrix} \tau \\ 1 \end{pmatrix} = \varphi(\alpha)\begin{pmatrix} \tau \\ 1 \end{pmatrix}$, meaning $\lambda_{\theta}$ is an eigenvalue of $\varphi(\alpha)$ with corresponding eigenvector $\begin{pmatrix} \tau \\ 1 \end{pmatrix}$. Let $\omega \in K^{\times}$, then $\varphi(\phi(\omega)) \cdot \tau = \tau$, and therefore $\varphi(\phi(\omega))\begin{pmatrix} \tau \\ 1 \end{pmatrix} = \mu \begin{pmatrix} \tau \\ 1 \end{pmatrix}$ for some $\mu \in \mathbb{C}$. The matrices $\varphi(\alpha)$ and $\varphi(\phi(\omega))$ are two real matrices that share the same basis of eigenvectors (since $\tau \notin \mathbb{R}$, they share both $\begin{pmatrix} \tau \\ 1 \end{pmatrix}$ and its complex conjugate), so they commute. This implies $\alpha \phi(\omega) = \phi(\omega)\alpha$. Thus, $\alpha \in C_B(\phi(K)) = \phi(K)$, which means $\alpha \in \phi(K) \cap \mathcal{O} = \phi(R)$. Therefore, $\alpha = \phi(\beta)$ for some $\beta \in R$. Hence, $\varphi(\phi(\beta)) \begin{pmatrix} \tau \\ 1 \end{pmatrix} = \lambda_{\theta}\begin{pmatrix} \tau \\ 1 \end{pmatrix},$ which means $\lambda_{\theta}$ is an eigenvalue of $\varphi(\phi(\beta))$. Note that the characteristic polynomial of $\varphi(\phi(\beta))$ is the same as the minimal polynomial of $\beta$ over $\mathbb{Q}$. This implies that $\lambda_{\theta} = \beta$ or $\lambda_{\theta} = \overline{\beta}$, so $\lambda_{\theta} \in R$.

Conversely, let $r \in R$. Since $\varphi(\phi(r)) \cdot \tau = \tau$, we have $\varphi(\phi(r))\begin{pmatrix} \tau \\ 1 \end{pmatrix} = \mu \begin{pmatrix} \tau \\ 1 \end{pmatrix},$ for some $\mu \in \mathbb{C}$, allowing us to conclude that $\mu = r$ or $\mu = \overline{r}$. Define $M_{\mu} := \mu I_2 \in M_2(\mathbb{C})$. It follows that $M_{\mu}\Lambda_{\tau} \subset \Lambda_{\tau}$, which induces an endomorphism $\theta_{\mu}$ of $A$ such that $\theta_{\mu}(z+\Lambda_{\tau}) = M_{\mu}z+\Lambda_{\tau}$ for all $z \in \mathbb{C}^2$. 

Furthermore, for any $\alpha \in \mathcal{O}$, we have $$\theta_{\mu} \circ \iota(\alpha)(z+\Lambda_{\tau}) = \theta_{\mu}(\varphi(\alpha)z +\Lambda_{\tau}) = M_{\mu}\varphi(\alpha)z +\Lambda_{\tau}$$ and $$\iota(\alpha) \circ \theta_{\mu}(z+\Lambda_{\tau}) = \iota(\alpha)(M_{\mu}z +\Lambda_{\tau}) = \varphi(\alpha)M_{\mu}z +\Lambda_{\tau}$$ 
for all $z\in \mathbb{C}^2$. Since $M_{\mu}$ is a scalar matrix, it commutes with $\varphi(\alpha)$ (i.e., $M_{\mu} \varphi(\alpha) = \varphi(\alpha)M_{\mu}$). This equality holds for all $\alpha \in \mathcal{O}$, proving that $\theta_{\mu} \in \operatorname{End}(A, \iota)$.

This proves that $\operatorname{End}(A, \iota) \cong R$.
\end{proof}

\begin{proposicion}\label{prop_tau}
Suppose that $B = \left(\frac{a,b}{\mathbb{Q}}\right)$ is an indefinite quaternion algebra, with $a,b\in\mathbb{Q}^{\times}$ and $a>0$. Let $d$ be the discriminant of $K$, and let $\tau_R=f\frac{d+\sqrt{d}}{2}$ be a generator of $R$, where $f$ is the conductor of $R$. Let $\Gamma(\mathcal{O},1)\cdot \tau \in X(D,1)$ be a CM point by $R$, and let $\phi: R \hookrightarrow \mathcal{O}$ be an optimal homomorphism such that $\tau$ is the unique fixed point of $\varphi(v)$ in $\mathcal{H}$ for every $v\in \phi(K^{\times})$. Write $\phi(\sqrt{d}) = y\,i + z\,j + t\,ij$ with $y,z,t\in\mathbb{Q}$. Then
\[
\tau = \frac{y\sqrt{a} + \varepsilon \sqrt{d}}{b\,(z - t\sqrt{a})},
\]
where $\varepsilon= \pm 1$ is chosen so that \(\tau \in \mathcal{H}\). Additionally, we have $\phi(\sqrt{d})\in \frac{1}{f} \mathcal{O}$.
\end{proposicion}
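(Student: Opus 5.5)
To prove the formula for $\tau$, I will compute the fixed point of $\varphi(\phi(\sqrt{d}))$ on $\mathcal{H}$. Since $\sqrt{d}$ has trace zero and $\phi$ is a $\mathbb{Q}$-algebra homomorphism, $\phi(\sqrt{d})$ is a pure quaternion: it has reduced trace $0$. This justifies writing $\phi(\sqrt{d}) = yi+zj+tij$ with no scalar part. By \eqref{homoiny1},
\[
\varphi(\phi(\sqrt{d}))=\begin{pmatrix} y\sqrt a & z+t\sqrt a\\ b(z-t\sqrt a) & -y\sqrt a\end{pmatrix}.
\]
Its determinant equals $\Nrd(\phi(\sqrt d)) = -d$, which gives the relation
\[
a y^2 + b z^2 - ab t^2 = d .
\]
By the earlier proposition (\cite[Corollary 6.2]{Alsina-Bayer}), every nontrivial element of $\varphi(\phi(K^\times))$ has the same fixed point. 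Hence $\tau$ is the fixed point in $\mathcal{H}$ of this matrix acting as a Möbius transformation.

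The fixed-point equation $\tau = \frac{y\sqrt a\,\tau + (z+t\sqrt a)}{b(z-t\sqrt a)\tau - y\sqrt a}$ becomes the quadratic
\[
b(z-t\sqrt a)\tau^2 - 2y\sqrt a\,\tau - (z+t\sqrt a)=0.
\]
Its discriminant is
\[
4ay^2 + 4b(z^2-at^2) = 4d,
\]
using the relation above. So the roots are $\tau = \frac{y\sqrt a \pm \sqrt d}{b(z-t\sqrt a)}$.

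One degenerate case must be excluded: $z - t\sqrt a = 0$. Since $\sqrt a\notin\mathbb{Q}$ (otherwise $B$ would split, contradicting $D>1$), this forces $z=t=0$. Then $ay^2=d<0$, which is impossible since $a>0$. Thus the denominator is a nonzero real number. Because $d<0$, $\sqrt d$ is purely imaginary, so exactly one sign $\varepsilon=\pm1$ places $\tau$ in $\mathcal{H}$.

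For the claim $\phi(\sqrt d)\in\frac1f\mathcal{O}$, I will use integrality of the generator. Since $\tau_R = f\frac{d+\sqrt d}{2}\in R$, we have $\phi(\tau_R) = \frac{fd}{2} + \frac f2\phi(\sqrt d)\in\mathcal{O}$. I also need $f\sqrt d\in R$, which holds because $f\sqrt d = 2\tau_R - fd$ with $fd\in\mathbb{Z}$. Therefore $f\phi(\sqrt d) = \phi(f\sqrt d)\in \phi(R)\subset\mathcal{O}$, that is, $\phi(\sqrt d)\in\frac1f\mathcal{O}$.

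The only delicate points are the nonvanishing of the denominator and the choice of sign; both are handled above. Everything else is direct computation.
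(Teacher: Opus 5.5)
Your proof is correct and follows the paper's argument. The paper uses $\varphi(\phi(\tau_R)) = \frac{fd}{2}I + \frac{f}{2}\varphi(\phi(\sqrt{d}))$ where you use $\varphi(\phi(\sqrt{d}))$; the two have the same M\"obius fixed points and give the identical quadratic, and the $\frac{1}{f}\mathcal{O}$ argument is the same. Your check that $z - t\sqrt{a} \neq 0$ and that exactly one sign $\varepsilon$ puts $\tau$ in $\mathcal{H}$ supplies details the paper leaves implicit.
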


\begin{proof}
Since $\tau$ is the unique fixed point of $\varphi(v)$ for every $v\in \phi(K^{\times})$, then taking $v=\phi(\tau_R)$ we obtain $((\varphi \circ \phi) (\tau_R))\cdot \tau =\tau$. The homomorphism $\phi$ extends to $K$, hence
$$\phi(\tau_R) = \frac{fd}{2} + \frac{fy}{2}i + \frac{fz}{2}j + \frac{ft}{2}ij.$$
Applying $\varphi$ yields
$$\varphi(\phi(\tau_R)) = \begin{pmatrix}
\frac{fd}{2} + \frac{fy}{2}\sqrt{a} & \frac{fz}{2} + \frac{ft}{2}\sqrt{a} \\[2mm]
b\left(\frac{fz}{2} - \frac{ft}{2}\sqrt{a}\right) & \frac{fd}{2} - \frac{fy}{2}\sqrt{a}
\end{pmatrix}.$$
The fixed-point condition on $\tau$ leads to 
$$b(z-t\sqrt{a})\tau^2 -2y\sqrt{a}\tau -z-t\sqrt{a}=0.$$
As $\phi(\sqrt{d})$ is a root of the polynomial $x^2 -d \in \mathbb{Q}[x]$, we have $ay^2 +bz^2 -abt^2=d$, from which we obtain
$$\tau=\frac{y\sqrt{a}+\varepsilon\sqrt{d}}{b(z-t\sqrt{a})}.$$
On the other hand, since $\tau_R=f\frac{d+\sqrt{d}}{2} \in R$, we have $\sqrt{d}=\frac{1}{f}(2 \tau_R -fd)$. Applying $\phi$, we obtain $\phi(\sqrt{d})=\frac{1}{f}\phi(2\tau_R -fd) \in \frac{1}{f}\mathcal{O}$.
\end{proof}

\begin{observacion}\label{obs_order}
Let $\left\lbrace e_1, e_2, e_3, e_4 \right\rbrace$ be a $\mathbb{Z}$-basis of $\mathcal{O}$. Writing each $e_l$ in terms of the basis $\left\lbrace 1, i, j, ij \right\rbrace $, we have $e_l=q_{l1}\cdot 1+q_{l2}\cdot i+q_{l3}\cdot j+q_{l4}\cdot ij$, where $q_{lr}\in \mathbb{Q}$ for $1\leq l, r \leq 4$. Let $N$ be the least common multiple of all denominators of the $q_{lr}$. Then $Nq_{lr} \in \mathbb{Z}$ for $1\leq l, r \leq 4$, and hence $Ne_l \in \mathbb{Z}\cdot 1 +\mathbb{Z}\cdot i+\mathbb{Z}\cdot j+\mathbb{Z}\cdot ij$ for all $1\leq l \leq 4$. Therefore, $\mathcal{O}\subset \frac{1}{N}\left(\mathbb{Z}\cdot 1 +\mathbb{Z}\cdot i+\mathbb{Z}\cdot j+\mathbb{Z}\cdot ij \right)$, where $N$ depends only on $\mathcal{O}$.
\end{observacion}

\section{Heights and Liouville's inequality}

We define $M_{\mathbb{Q}} = \left\lbrace |\cdot|_p : \text{$p$ is prime or $p=\infty$} \right\rbrace$, normalized as follows. If $p=\infty$, then $|\cdot|_p$ is the ordinary absolute value on $\mathbb{Q}$, and, if $p$ is prime, then the absolute value is the $p$-adic absolute value on $\mathbb{Q}$, with $|p|_{p}=1/p$. For a number field $L$, let $M_L$ denote the set of absolute values on $L$ whose restriction to $\mathbb{Q}$ belongs to $M_{\mathbb{Q}}$.

Let $X\in \mathbb{P}^n (\overline{\mathbb{Q}})$ be a point given by $X=[x_0: \cdots :x_n]$, where $x_0, \ldots, x_n \in L$. We define the absolute logarithmic height of $X$ as
$$h(X) := \sum_{v\in M_L} \max_j \log |x_j|_v.$$ 

The multiplicative height of $X$ is defined by
$$H(X) := \exp(h(X)) = \prod_{v\in M_L} \max_j |x_j|_v.$$
We introduce the standard notation $\log^+(x) = \max\left\lbrace 0, \log(x) \right\rbrace$ for all $x>0$, and set $\log^+(0)=0$. 

Now, let $X=(x_1, \ldots, x_n)\in \mathbb{A}^n(\overline{\mathbb{Q}})$. Via the standard embedding into $\mathbb{P}^n(\overline{\mathbb{Q}})$ given by
$$X=(x_1, \ldots, x_n) \mapsto [1: x_1: \cdots :x_n],$$
we define the height of $X$ as the height of its image; that is,
$$h(X) = h([1: x_1: \cdots :x_n]).$$

Thus, for an algebraic number $\alpha$, the absolute logarithmic height is given by
$$h(\alpha) = \sum_{v\in M_L} \log^+|\alpha|_v,$$ 
where $L$ is a number field containing $\alpha$. Consequently, the multiplicative height of $\alpha$ is
$$H(\alpha) = \prod_{v\in M_L} \max\{1, |\alpha|_v\}.$$

\begin{proposicion}\label{prop_height}
Let $\alpha, \beta \in \overline{\mathbb{Q}}$ be two algebraic numbers. Then:
	\begin{enumerate}
    	\item $H(\alpha\beta) \le H(\alpha)H(\beta)$.
    	\item $H(\alpha+\beta) \le 2 H(\alpha)H(\beta)$.
    	\item $H(\alpha^{-1}) = H(\alpha)$ for $\alpha \neq 0$.
	\end{enumerate}
\end{proposicion}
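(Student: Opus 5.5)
The three inequalities will be proved place by place. Fix a number field $L$ containing $\alpha$ and $\beta$. Then $H(x)=\prod_{v\in M_L}\max\{1,|x|_v\}$ for $x\in\{\alpha,\beta,\alpha\beta,\alpha+\beta,\alpha^{-1}\}$, computed with the same set of places. Each inequality will follow from a local inequality at every $v\in M_L$, multiplied over all $v$. We may assume that only finitely many factors differ from $1$, as is implicit in the definition of $H$ (with the usual degree normalization, this does not affect the argument).

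For (1), the local statement is $\max\{1,|\alpha\beta|_v\}\le \max\{1,|\alpha|_v\}\max\{1,|\beta|_v\}$. This holds because $|\alpha\beta|_v=|\alpha|_v|\beta|_v$, and each factor on the right dominates both $1$ and the corresponding absolute value. Taking the product over $v$ gives $H(\alpha\beta)\le H(\alpha)H(\beta)$.

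For (2), we split the places into archimedean and non-archimedean ones. At a non-archimedean $v$, the ultrametric inequality gives $|\alpha+\beta|_v\le\max\{|\alpha|_v,|\beta|_v\}$. Hence $\max\{1,|\alpha+\beta|_v\}\le\max\{1,|\alpha|_v\}\max\{1,|\beta|_v\}$. At an archimedean $v$, we have $|\alpha+\beta|_v\le 2\max\{|\alpha|_v,|\beta|_v\}$. This gives the local bound $\max\{1,|\alpha+\beta|_v\}\le 2\max\{1,|\alpha|_v\}\max\{1,|\beta|_v\}$.

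The main point to check is that the archimedean factors of $2$ combine into a single factor $2$, not $2^{[L:\mathbb{Q}]}$. This requires the local degree weighting $n_v/[L:\mathbb{Q}]$ implicit in the absolute height. With that weighting, the archimedean contributions total $2^{\sum_{v\mid\infty}n_v/[L:\mathbb{Q}]}=2$. So I will state that the products are taken with the normalized absolute values $|\cdot|_v^{n_v/[L:\mathbb{Q}]}$, which is what makes $H$ independent of $L$. The final constant then comes out as exactly $2$. In any case, the application in the paper needs only a constant.

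For (3), apply the product formula $\prod_{v}|\alpha|_v=1$ for $\alpha\ne0$. Write $\max\{1,|\alpha^{-1}|_v\}=\max\{1,|\alpha|_v\}\,|\alpha|_v^{-1}$. Taking the product over $v$ gives $H(\alpha^{-1})=H(\alpha)\prod_v|\alpha|_v^{-1}=H(\alpha)$.
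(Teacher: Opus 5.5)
Your proof is correct. The paper gives no argument of its own here; it simply cites \cite[Lemma 1.9.2]{Evertse-Gyoy}. What you have written is the standard place-by-place proof behind that reference. It uses a local inequality at each $v\in M_L$: multiplicativity of $|\cdot|_v$ for (1), and for (2) the ultrametric inequality at finite places together with $|\alpha+\beta|_v\le 2\max\{|\alpha|_v,|\beta|_v\}$ at infinite places. For (3) it uses the product formula.

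What your version adds over the citation is that it exposes the dependence on the normalization, and your insistence on the weights $n_v/[L:\mathbb{Q}]$ is justified. As literally defined, the paper's $M_L$ consists of the absolute values on $L$ extending those in $M_{\mathbb{Q}}$, with no weights. Under that definition your archimedean factors in (2) would multiply to $2^{r_1+r_2}$ rather than $2$. Worse, (3) would actually fail, since the product formula needs the weights: for $\alpha=1+2i\in\mathbb{Q}(i)$ one would get $H(\alpha)=\sqrt{5}$ but $H(\alpha^{-1})=5$.

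With the normalized absolute values $|\cdot|_v^{n_v/[L:\mathbb{Q}]}$, your argument works as written. Raising the archimedean bound to the power $n_v/[L:\mathbb{Q}]$, together with $\sum_{v\mid\infty}n_v=[L:\mathbb{Q}]$, gives exactly the constant $2$, and (3) goes through. My only suggestion is presentational: fix this normalization at the very start rather than midway through (2), since (3) depends on it just as much.
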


\begin{proof}
See \cite[Lemma 1.9.2]{Evertse-Gyoy}.
\end{proof}

\begin{teorema}[Liouville's inequality]
Let $L$ and $K$ be number fields such that $L$ is a finite extension of $K$ of degree $r$. For any distinct elements $\alpha \in L$ and $\beta \in K$, we have that  
	\begin{equation}\label{eq:liouville}
	|\alpha -\beta| \geq \frac{1}{(2H(\alpha) H(\beta))^r}.
	\end{equation}
\end{teorema}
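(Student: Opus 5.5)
The plan is the classical product-formula argument applied to the nonzero algebraic number $\gamma := \alpha - \beta \in L$. Here $|\cdot|$ denotes the archimedean absolute value on $L$ obtained from a fixed complex embedding. All absolute values $|\cdot|_v$, $v \in M_L$, are normalized so that $H$ is the absolute height of Section 3; concretely, $|x|_v = \|x\|_v^{d_v/[L:\mathbb{Q}]}$ with $d_v = [L_v:\mathbb{Q}_p]$. With this normalization the product formula reads $\prod_{v\in M_L}|\gamma|_v = 1$, and the fixed embedding contributes the factor $|\gamma|^{d_{v_0}/[L:\mathbb{Q}]}$.

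\textbf{Step 1 (upper bounds at the other places).} For every $v \in M_L$, I will bound $|\gamma|_v$ by the local heights. At a non-archimedean $v$ the ultrametric inequality gives $|\alpha-\beta|_v \le \max\{1,|\alpha|_v\}\max\{1,|\beta|_v\}$. At an archimedean $v$ one has $|\alpha-\beta|_v \le 2^{d_v/[L:\mathbb{Q}]}\max\{1,|\alpha|_v\}\max\{1,|\beta|_v\}$. Multiplying over all $v \ne v_0$ and using $\sum_{v\mid\infty} d_v/[L:\mathbb{Q}] = 1$ gives
\[
\prod_{v\ne v_0}|\gamma|_v \le 2\,H(\alpha)H(\beta),
\]
which is exactly the estimate of Proposition \ref{prop_height}(2) applied to $\alpha$ and $-\beta$.

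\textbf{Step 2 (product formula).} From $|\gamma|_{v_0} = \bigl(\prod_{v\ne v_0}|\gamma|_v\bigr)^{-1} \ge (2H(\alpha)H(\beta))^{-1}$ I get $|\gamma| \ge (2H(\alpha)H(\beta))^{-[L:\mathbb{Q}]/d_{v_0}}$. The last task is to identify the exponent $[L:\mathbb{Q}]/d_{v_0}$ with the degree $r$ in \eqref{eq:liouville}.

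\textbf{Step 3 (the exponent, main obstacle).} This step does \emph{not} work for a general base field $K$, and the statement as worded is false in that generality. Take $K = L = \mathbb{Q}(\sqrt2)$, so $r = 1$, with $\alpha = 0$ and $\beta = (\sqrt2-1)^n$. Then $|\alpha-\beta| = (1+\sqrt2)^{-n}$, while the right-hand side of \eqref{eq:liouville} is $1/(2H(\beta)) = 1/(2(1+\sqrt2)^{n/2})$, which is larger for large $n$. So no argument can produce the exponent $[L:K]$ when $K \ne \mathbb{Q}$.

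What the argument does yield is the exponent $[L:\mathbb{Q}]$, which coincides with $r$ precisely when $K = \mathbb{Q}$. I therefore expect to prove \eqref{eq:liouville} only in the form $|\alpha-\beta| \ge (2H(\alpha)H(\beta))^{-[L:\mathbb{Q}]}$, i.e. the case $K=\mathbb{Q}$ read with $L$ any number field containing both $\alpha$ and $\beta$. In Section 4 I would then take $L$ to be a number field containing $\tau$ and all $\tau_n$. Such an $L$ need not be uniform in $n$, since $\tau_n$ lies in $\mathbb{Q}(\sqrt{a},\sqrt{d_n},\sqrt{b})$ and $d_n$ varies. However, $\tau_n$ and $\tau$ have degree bounded by $8$ over $\mathbb{Q}$, so a field of bounded degree over $\mathbb{Q}$ containing both can be chosen for each $n$. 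The exponent in the main theorem changes only by an absolute constant, which can be absorbed.
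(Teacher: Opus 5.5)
Your analysis is correct, and it is more careful than the paper, whose proof is only a citation to Bombieri--Gubler. Steps 1--2 are the standard product-formula argument. They give $|\alpha-\beta|\ge(2H(\alpha)H(\beta))^{-[L:\mathbb{Q}]/d_{v_0}}$, which settles the case $K=\mathbb{Q}$.

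Your counterexample is also valid. For $K=L=\mathbb{Q}(\sqrt2)$, $\alpha=0$ and $\beta=(\sqrt2-1)^n$, one has $H(\beta)=(1+\sqrt2)^{n/2}$, and the inequality fails for every $n\ge2$. So, read with the ordinary absolute value, the statement is false in the generality stated; what has been lost is the normalization of the absolute value. The version with exponent $r=[L:K]$ holds for the absolute value normalized relative to $K$, namely $|x|_v=|x|^{[K_v:\mathbb{R}]/[K:\mathbb{Q}]}$. Your Step 2 already proves it: $|\gamma|_v=\bigl(|\gamma|^{d_{v_0}/[L:\mathbb{Q}]}\bigr)^{r[K_v:\mathbb{R}]/d_{v_0}}\ge(2H(\alpha)H(\beta))^{-r}$, because $[K_v:\mathbb{R}]\le d_{v_0}$.

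Two corrections to your side remarks.

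First, the exponent $[L:\mathbb{Q}]/d_{v_0}$ equals $r$ not only for $K=\mathbb{Q}$ but also for $K$ imaginary quadratic. There $d_{v_0}=2$ and $[L:\mathbb{Q}]=2r$, so the statement as written does hold. Your phrase ``precisely when $K=\mathbb{Q}$'' is therefore too strong, though your conclusion is unaffected.

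Second, in Section 4 the choice $r\le2$ forces, in general, $[K:\mathbb{Q}]\ge4$ with $\tau\in K$, e.g.\ $K=\mathbb{Q}(\sqrt D,\sqrt d)$. For such $K$ the stated inequality fails: take $\alpha=0$ and $\beta=\epsilon^{-n}$, where $\epsilon$ is the fundamental unit of $\mathbb{Q}(\sqrt D)$. Apply your Step 2 with $L=\mathbb{Q}(\sqrt D,\sqrt d,\sqrt{d_n})$, so that $[L:\mathbb{Q}]\le8$ and $d_{v_0}=2$ (as $\tau\notin\mathbb{R}$). This gives $|\tau_n-\tau|\ge(2H(\tau)H(\tau_n))^{-4}$, and combined with the paper's height bound it yields $|\operatorname{disc}(P_n)|^{-8}$ rather than $|\operatorname{disc}(P_n)|^{-4}$. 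That changes the exponent in the main theorem; it is not something that can be absorbed into $\kappa$.
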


\begin{proof}
See \cite[Theorem 1.5.21]{Bombieri}.
\end{proof}

\section{Main Result}

In this section, we will prove the Theorem \ref{mainthm}. By Theorem \ref{interpretacion_moduli}, there exist $\tau, \tau_n \in \mathcal{H}$ that correspond to $P$ and $P_n$, respectively.

\begin{lema}\label{lema_3}
Suppose that the sequence $\{P_n=[(A_n, \iota_n)]\}_{n\in \mathbb{N}}$ converges to $P$. Then there exists a sequence $\{\tau'_n\}_{n\in \mathbb{N}} \subset \mathcal{H}$, with $\tau'_n \in \Gamma(\mathcal{O}, 1) \cdot \tau_n$ for all $n$, such that $\tau'_n \to \tau$ in $\mathcal{H}$.
\end{lema}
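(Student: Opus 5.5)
The plan is to use that the quotient map $\pi:\mathcal{H}\to X(D,1)=\Gamma(\mathcal{O},1)\backslash\mathcal{H}$ is continuous and open, and that $\Gamma(\mathcal{O},1)$ acts properly discontinuously on $\mathcal{H}$. Convergence $P_n\to P$ in $X(D,1)$ means convergence of $\pi(\tau_n)$ to $\pi(\tau)$ in the quotient topology, via the bijection of Theorem \ref{interpretacion_moduli}. For each $k\ge 1$ let $U_k$ be the open hyperbolic disc of radius $1/k$ centred at $\tau$. Since $\pi$ is open, $\pi(U_k)$ is an open neighbourhood of $P$. Hence there is an index $N_k$ such that $P_n\in\pi(U_k)$ for all $n\ge N_k$, and we may take $N_1<N_2<\cdots$ strictly increasing.

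Openness of $\pi$ follows from $\pi^{-1}(\pi(U))=\bigcup_{\gamma\in\Gamma(\mathcal{O},1)}\gamma U$, which is a union of open sets because each $\gamma$ acts by a homeomorphism (a Möbius transformation) of $\mathcal{H}$. It is worth stating this explicitly, since it is the only topological input.

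Now we choose the representatives. For $N_k\le n<N_{k+1}$, the relation $P_n\in\pi(U_k)$ gives some $\gamma_n\in\Gamma(\mathcal{O},1)$ with $\tau'_n:=\gamma_n\cdot\tau_n\in U_k$. For $n<N_1$ we simply put $\tau'_n=\tau_n$. Then $\tau'_n\in\Gamma(\mathcal{O},1)\cdot\tau_n$ for every $n$, and the hyperbolic distance satisfies $d_{\mathcal{H}}(\tau'_n,\tau)<1/k$ whenever $n\ge N_k$. Thus $\tau'_n\to\tau$ in the hyperbolic metric. This metric induces the usual topology on $\mathcal{H}$, so $\tau'_n\to\tau$ also in the Euclidean sense. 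Equivalently, one can work directly with the Euclidean discs $\{|z-\tau|<\min(1/k,\operatorname{Im}\tau/2)\}$, which are contained in $\mathcal{H}$.

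The only real obstacle is to make precise which topology on $X(D,1)$ the hypothesis $P_n\to P$ refers to. I will take it to be the quotient topology, that is, the complex structure of the compact Riemann surface. With that convention the argument above is complete. If instead $|P_n-P|$ is read as a distance in a local chart, the same conclusion holds. A chart near $P$ is obtained from a small disc $U$ around $\tau$ on which $\pi$ restricted to $U$ is, modulo the finite stabilizer of $\tau$, a homeomorphism onto its image; this uses proper discontinuity. Convergence in the chart is then equivalent to convergence in the quotient topology, and the argument applies unchanged.
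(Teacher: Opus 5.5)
Your argument is correct, but it takes a different route from the paper. The paper works with the quotient metric $d_{X(D,1)}(P_n,P)=\inf_{\gamma\in\Gamma(\mathcal{O},1)}d_{\mathcal{H}}(\gamma\tau_n,\tau)$. It uses proper discontinuity (the orbit of $\tau_n$ meets a compact hyperbolic ball around $\tau$ in finitely many points) to show that this infimum is attained. It then takes $\tau'_n$ to be a minimizer, so that $d_{\mathcal{H}}(\tau'_n,\tau)=d_{X(D,1)}(P_n,P)\to 0$. You instead work in the quotient topology, use only that $\pi$ is open, and make a diagonal choice of representatives in the shrinking discs $U_k$. This needs neither a metric on $X(D,1)$ nor proper discontinuity, and it works for any notion of convergence compatible with the quotient topology. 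What the paper's minimizing choice buys is the exact identity $d_{\mathcal{H}}(\tau'_n,\tau)=d_{X(D,1)}(P_n,P)$. An arbitrary representative in $U_k$ only gives $d_{\mathcal{H}}(\tau'_n,\tau)\ge d_{X(D,1)}(P_n,P)$. That is the wrong direction if one later wants to turn the Liouville lower bound on $|\tau'_n-\tau|$ into a lower bound on the distance between $P_n$ and $P$ on the curve, which is how Theorem \ref{mainthm} is phrased. This does not affect the lemma as stated, and it is easily repaired. By proper discontinuity, for $n$ large any two representatives close to $\tau$ differ by an element of the finite stabilizer of $\tau$. Such an element is an isometry fixing $\tau$, so your $\tau'_n$ eventually realizes the minimum as well.
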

\begin{proof}
For each \( n \in \mathbb{N} \), we consider the orbit \( \Gamma(\mathcal{O}, 1) \cdot \tau_n = \{ \gamma \tau_n \mid \gamma \in \Gamma(\mathcal{O}, 1) \} \). Since \( P_n \to P \) in \( X(D,1) \), we have 
\[
\sigma_n := d_{X(D, 1)}(P_n, P) = \inf_{\gamma \in \Gamma(\mathcal{O}, 1)} d_{\mathcal{H}}(\gamma \tau_n, \tau) 
\to 0 \quad \text{as } n \to \infty.
\]
We choose \( \tau'_n \in \Gamma(\mathcal{O}, 1) \cdot \tau_n \) such that
\[
d_{\mathcal{H}}(\tau'_n, \tau) = \min_{\gamma \in \Gamma(\mathcal{O}, 1)} d_{\mathcal{H}}(\gamma \tau_n, \tau).
\]
We first show that the minimum exists. Since \( \sigma_n \to 0 \), it follows that \( \sigma_n \) is bounded, say \( \sigma_n \leq L \) for all \( n \). Let us consider the closed hyperbolic ball
\[
B_{\mathcal{H}}[\tau, L + 1] = \{ z \in \mathcal{H} \mid d_{\mathcal{H}}(z, \tau) \leq L + 1 \},
\]
which is compact in \( \mathcal{H} \). Given that \( \Gamma(\mathcal{O}, 1) \) acts properly discontinuously on \( \mathcal{H} \), the intersection
\[
(\Gamma(\mathcal{O}, 1) \cdot \tau_n) \cap B_{\mathcal{H}}[\tau, L+1]
\]
is finite. Consequently, the infimum is attained, which means the minimum exists, and \( \tau'_n \) is well-defined.

By the choice of \(\tau'_n \) and the definition of the metric in \( X(D,1) \), we have
\[
d_{\mathcal{H}}(\tau'_n, \tau) = \min_{\gamma \in \Gamma(\mathcal{O}, 1)} d_{\mathcal{H}}(\gamma \tau_n, \tau) = d_{X(D, 1)}(P_n, P) = \sigma_n \to 0.
\]
Therefore, \(\tau'_n \to \tau \) in \( \mathcal{H} \).
\end{proof} 

By Lemma \ref{lema_3}, we can choose $\tau_n \in \mathcal{H}$ corresponding to $P_n$ such that $\tau_n\to\tau$.

By Remark \ref{obs1}, $B=\left(\frac{D, q}{\mathbb{Q}} \right)$ for some prime $q$. Let $\mathcal{O}$ be a maximal order in $B$. We fix the injective $\mathbb{Q}$-algebra homomorphism $\varphi: B \hookrightarrow M_2(\mathbb{R})$ given by 
\[
\varphi(x_0 + x_1 i + x_2 j + x_3 ij) = 
\begin{pmatrix}
x_0 + x_1\sqrt{D} & x_2 + x_3\sqrt{D} \\
q(x_2 - x_3\sqrt{D}) & x_0 - x_1\sqrt{D}
\end{pmatrix},
\]
as in \eqref{homoiny1}.

Let $K=\mathbb{Q}(\sqrt{d})$ be an imaginary quadratic field, where $d<0$ is the discriminant of $K$. We assume that 
$\left(\frac{d}{p}\right) \neq 1,$ for every prime $p$ such that $p \mid D$. It follows from Lemma \ref{lema_2} that there exists a homomorphism from $K$ into $B$.

Let $R$ be an order of $K$ given by 
$$R = \mathbb{Z} + \mathbb{Z}f\frac{d + \sqrt{d}}{2},$$
where $f$ is the conductor of $R$. The complex number
$$\tau_R =f \frac{d + \sqrt{d}}{2} \in \mathcal{H}$$
is a generator of $R$. On the other hand, let $\{d_n\}_{n\in\mathbb{N}}$ be a sequence of negative discriminants such that $|d_n| \to \infty$ as $n \to \infty$, and suppose that $\left(\frac{d_n}{p}\right) \neq 1$, for every prime $p$ such that $p \mid D$. For each $n$, let $K_n = \mathbb{Q}(\sqrt{d_n})$ be the corresponding imaginary quadratic field. By Lemma \ref{lema_2}, there exists a homomorphism from $K_n$ into $B$.

Let $R_n$ be an order of $K_n$ given by
$$R_n = \mathbb{Z} + \mathbb{Z}f_n\frac{d_n + \sqrt{d_n}}{2},$$
where $f_n$ is the conductor of $R_n$. For each $n$, the complex number
$$\tau_{R_n} = f_n\frac{d_n + \sqrt{d_n}}{2} \in \mathcal{H}$$
is a generator of $R_n$.

Since $P=[(A, \iota)] \in X(D, 1)$ is a CM point by $R$, there exists an optimal homomorphism $\phi: R \hookrightarrow \mathcal{O}$ such that $\tau$ is the unique fixed point of $\varphi(v)$ for every $v\in \phi(K^{\times})$. By Proposition \ref{prop_tau} and Remark \ref{obs_order}, we can write
$$\phi(\sqrt{d}) = \frac{y}{Nf}\,i + \frac{z}{Nf}\,j + \frac{t}{Nf}\,ij,$$
with $y,z,t\in\mathbb{Z}$. Since $\phi(\sqrt{d})$ is a root of the polynomial $x^2 -d \in \mathbb{Q}[x]$, it satisfies
	\begin{equation}\label{equa:Dq_1}
	D\frac{y^2}{N^2 f^2} +q\frac{z^2}{N^2 f^2}-Dq\frac{t^2}{N^2 f^2}=d.
	\end{equation} 

Given that $\left\lbrace P_n=[(A_n, \iota_n)]\right\rbrace_{n\in \mathbb{N}}$ is a sequence in $X(D, 1)$ of CM points by $R_n$, for each $n$, there exists an optimal homomorphism $\phi_n: R_n \hookrightarrow \mathcal{O}$ such that $\tau_n$ is the unique fixed point of $\varphi(v)$ for every $v\in \phi_n(K_n^{\times})$. By Proposition \ref{prop_tau} and Remark \ref{obs_order}, we can write
$$\phi_n(\sqrt{d_n}) = \frac{y_n}{N f_n}\,i + \frac{z_n}{N f_n}\,j + \frac{t_n}{N f_n}\,ij,$$
with $y_n, z_n, t_n\in\mathbb{Z}$. Since $\phi_n(\sqrt{d_n})$ is a root of the polynomial $x^2 -d_n \in \mathbb{Q}[x]$, it satisfies
	\begin{equation}\label{equa:Dq_2}
	D\frac{y_n^2}{N^2 f_n^2} +q\frac{z_n^2}{N^2 f_n^2}-Dq\frac{t_n^2}{N^2 f_n^2}=d_n.
	\end{equation} 

Applying Proposition \ref{prop_tau} to $\tau$ and $\tau_n$, we obtain that 
$$\tau=\frac{y \sqrt{D}+\varepsilon N f\sqrt{d}}{q(z-t\sqrt{D})} \qquad \text{and} \qquad \tau_n=\frac{y_n \sqrt{D}+\varepsilon_n N f_n\sqrt{d_n}}{q(z_n -t_n\sqrt{D})},$$
where $\varepsilon=\pm 1$ and $\varepsilon_n=\pm 1$ are chosen so that $\tau, \tau_n \in \mathcal{H}$.

Using the relation (\ref{equa:Dq_2}) and the convergence $\tau_n\to\tau$, we will obtain upper bounds for $|y_n|,|z_n|,|t_n|$ in terms of $|d_n|$.

\begin{lema}\label{lema_1}
There exists a constant $C>0$ such that for all sufficiently large $n$, we have
$$|y_n|,\ |z_n|,\ |t_n| \le C f_n\sqrt{|d_n|}.$$
\end{lema}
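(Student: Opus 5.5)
The idea is to invert the formula for $\tau_n$ from Proposition \ref{prop_tau}. We express the three quantities $y_n$, $z_n - t_n\sqrt{D}$ and $z_n + t_n\sqrt{D}$ in terms of $\operatorname{Re}\tau_n$, $\operatorname{Im}\tau_n$, $|\tau_n|^2$ and the scale factor $N f_n\sqrt{|d_n|}$. Since $\tau_n \to \tau \in \mathcal{H}$, the first three quantities stay bounded. Moreover $\operatorname{Im}\tau_n$ stays bounded away from $0$ for large $n$, which gives the bound.

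\textbf{Step 1: the imaginary part.} Set $w_n := z_n - t_n\sqrt{D} \in \mathbb{R}$. Since $d_n<0$, we have $\sqrt{d_n} = i\sqrt{|d_n|}$, and the formula for $\tau_n$ gives
\[
\operatorname{Re}\tau_n = \frac{y_n\sqrt{D}}{q\,w_n}, \qquad \operatorname{Im}\tau_n = \frac{\varepsilon_n N f_n\sqrt{|d_n|}}{q\,w_n}.
\]
In particular $w_n \neq 0$, and
\[
|w_n| = \frac{N f_n \sqrt{|d_n|}}{q\,\operatorname{Im}\tau_n}.
\]
Because $\tau_n\to\tau$, we have $\operatorname{Im}\tau_n \ge \tfrac12 \operatorname{Im}\tau > 0$ for all large $n$. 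Hence $|w_n| \le C_1 f_n\sqrt{|d_n|}$.

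\textbf{Step 2: the coefficient $y_n$.} From the real part,
\[
|y_n| = \frac{q\,|w_n|\,|\operatorname{Re}\tau_n|}{\sqrt{D}}.
\]
Since $\operatorname{Re}\tau_n$ is bounded for large $n$ (it converges to $\operatorname{Re}\tau$), Step 1 gives $|y_n| \le C_2 f_n\sqrt{|d_n|}$.

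\textbf{Step 3: the conjugate combination $z_n + t_n\sqrt{D}$.} This is where the norm relation \eqref{equa:Dq_2} enters, and it is the only nontrivial step. Multiplying \eqref{equa:Dq_2} by $N^2 f_n^2$ gives
\[
D y_n^2 + N^2 f_n^2 |d_n| = -q\,(z_n - t_n\sqrt{D})(z_n + t_n\sqrt{D}).
\]
On the other hand, directly from the formula for $\tau_n$,
\[
|\tau_n|^2 = \frac{D y_n^2 + N^2 f_n^2|d_n|}{q^2 w_n^2}.
\]
Combining the two identities yields
\[
z_n + t_n\sqrt{D} = -q\,|\tau_n|^2\, w_n.
\]
Since $|\tau_n|^2 \to |\tau|^2$ is bounded, Step 1 gives $|z_n + t_n\sqrt{D}| \le C_3 f_n\sqrt{|d_n|}$.

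\textbf{Step 4: conclusion.} Solving the linear system,
\[
z_n = \tfrac12\bigl((z_n+t_n\sqrt{D}) + w_n\bigr), \qquad t_n = \tfrac{1}{2\sqrt{D}}\bigl((z_n+t_n\sqrt{D}) - w_n\bigr),
\]
so both are bounded by a constant times $f_n\sqrt{|d_n|}$. Taking $C$ to be the maximum of the resulting constants proves the lemma. The constants depend only on $N$, $q$, $D$ and $\tau$, via bounds on $\operatorname{Re}\tau_n$, $|\tau_n|^2$ and $1/\operatorname{Im}\tau_n$ for large $n$.

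The main obstacle is Step 3: the two-sided control on $z_n$ and $t_n$ individually comes from the norm equation, and the sign conventions (the choice of $\varepsilon_n$ and the sign of $w_n$) must be checked so that the identity for $|\tau_n|^2$ matches.
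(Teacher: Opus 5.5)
Your proof is correct. It shares the paper's overall strategy but handles the norm relation differently.

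Steps 1 and 2 match the paper: both read off $w_n = z_n - t_n\sqrt{D}$ from $\operatorname{Im}\tau_n$, and $y_n$ from $\operatorname{Re}\tau_n$.

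The routes split at Step 3.
\begin{itemize}
\item The paper substitutes $z_n = w_n + t_n\sqrt{D}$ into the norm relation and isolates the cross term, $2q\sqrt{D}\,w_n t_n = -N^2 f_n^2|d_n| - D y_n^2 - q w_n^2$. It bounds the right-hand side by a multiple of $f_n^2|d_n|$ and then divides by $|w_n|$. That division needs an extra \emph{lower} bound $|w_n| \gg f_n\sqrt{|d_n|}$, which comes from $\operatorname{Im}\tau_n$ being bounded above. It then recovers $z_n$ by the triangle inequality.
\item You instead factor $z_n^2 - Dt_n^2$ and compare with $|\tau_n|^2$. This gives the exact identity $z_n + t_n\sqrt{D} = -q|\tau_n|^2 w_n$, which is Vieta's formula for the product of the roots $\tau_n, \overline{\tau_n}$ of the fixed-point quadratic.
\end{itemize}

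Your route buys two things. First, it shows that $y_n$, $w_n$ and $z_n + t_n\sqrt{D}$ are exactly $N f_n\sqrt{|d_n|}$ times explicit continuous functions of $\tau_n$: namely $\varepsilon_n\operatorname{Re}\tau_n/(\sqrt{D}\operatorname{Im}\tau_n)$, $\varepsilon_n/(q\operatorname{Im}\tau_n)$ and $-\varepsilon_n|\tau_n|^2/\operatorname{Im}\tau_n$. So the bound holds uniformly for $\tau_n$ in any compact subset of $\mathcal{H}$, for instance a compact fundamental domain of $\Gamma(\mathcal{O},1)$, without convergence or $\delta$-bookkeeping. Second, the constants become transparent in terms of $\operatorname{Re}\tau$, $\operatorname{Im}\tau$ and $|\tau|$.

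The sign worry in your last paragraph is unfounded: $\varepsilon_n$ and $w_n$ enter $|\tau_n|^2$ only through their squares, so the identity holds regardless of those choices.
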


\begin{proof}
As $d_n < 0$, we have that
$$\tau_n = \frac{y_n\sqrt{D} + \varepsilon_n N f_n\sqrt{d_n}}{q(z_n - t_n\sqrt{D})} = \frac{y_n\sqrt{D}}{q(z_n - t_n\sqrt{D})} +\sqrt{-1}\frac{\varepsilon_n N f_n\sqrt{|d_n|}}{q(z_n - t_n\sqrt{D})}.$$
It then follows that
$$\lim_{n \to \infty} \frac{y_n\sqrt{D}}{q(z_n - t_n\sqrt{D})}=\frac{y\sqrt{D}}{q(z-t\sqrt{D})} \qquad \text{and} \qquad \lim_{n \to \infty} \frac{\varepsilon_n N f_n\sqrt{|d_n|}}{q(z_n - t_n\sqrt{D})}=\frac{\varepsilon N f\sqrt{|d|}}{q(z-t\sqrt{D})}.$$ 
We set 
$$U=\frac{y\sqrt{D}}{q(z-t\sqrt{D})} \qquad \text{and} \qquad V=\frac{\varepsilon N f\sqrt{|d|}}{q(z-t\sqrt{D})}>0.$$  

Since $V>0$, given $\delta>0$, there exists $M\in\mathbb{N}$ such that for all $n>M$, we have
$$\left| \frac{\varepsilon_n N f_n\sqrt{|d_n|}}{q(z_n-t_n\sqrt{D})} -V\right|<\delta \quad \text{ and } \quad \left| \frac{q(z_n-t_n\sqrt{D})}{\varepsilon_n N f_n\sqrt{|d_n|}} -\frac{1}{V}\right|<\delta,$$
hence,
	\begin{equation}\label{eq1} 
	\sqrt{|d_n|}<\frac{q}{|N|f_n}(\delta+V)|z_n-t_n \sqrt{D}| \quad \text{ and } \quad |z_n-t_n \sqrt{D}|<\frac{1}{q}			\left(\delta+\frac{1}{V} \right)|N|f_n \sqrt{|d_n|}.
	\end{equation}

Similarly, we may assume that for all $n>M$, we have 
$$\left|\frac{y_n \sqrt{D}}{q(z_n-t_n\sqrt{D})} -U \right|<\delta,$$
therefore,
$$\left|\frac{y_n\sqrt{D}}{q(z_n-t_n\sqrt{D})} \right|<\delta+|U|, $$
hence, 
$$|y_n|<\frac{q(\delta+|U|)}{\sqrt{D}}|z_n-t_n\sqrt{D}|.$$
Using the relation (\ref{eq1}), we conclude that 
	\begin{equation}\label{eq2}
	|y_n|<\frac{(\delta+|U|)}{\sqrt{D}}\left(\delta+\frac{1}{V}\right)|N| f_n\sqrt{|d_n|}.
	\end{equation}

On the other hand, we use the relation \eqref{equa:Dq_2} and the identity $z_n=(z_n-t_n\sqrt{D}) +t_n\sqrt{D}$. For $n>M$,
$$D \frac{y_n^2}{N^2 f_n^2} + q\frac{((z_n - t_n\sqrt{D})+ t_n\sqrt{D})^2}{N^2 f_n^2} -Dq\frac{t_n^2}{N^2 f_n^2} = -|d_n|,$$
thus 
$$D\frac{y_n^2}{N^2 f_n^2} +q\frac{(z_n -t_n \sqrt{D})^2}{N^2 f_n^2} +\frac{2q\sqrt{D}(z_n-t_n\sqrt{D})t_n}{N^2 f_n^2}=-|d_n|,$$
from which we obtain
$$\frac{2q\sqrt{D}(z_n-t_n\sqrt{D})t_n}{N^2 f_n^2}=-|d_n|-D\frac{y_n^2}{N^2 f_n^2} -\frac{q(z_n-t_n\sqrt{D})^2}{N^2 f_n^2}.$$

From the relations (\ref{eq1}) and (\ref{eq2}), it follows that
	\begin{equation}\label{eq3}
	\frac{2q\sqrt{D}|z_n -t_n\sqrt{D}||t_n|}{N^2 f_n^2}<\left(1+(\delta+|U|)^2 \left(\delta+\frac{1}{V} \right)^2 +\frac{1}{q}	\left(\delta +\frac{1}{V} \right)^2 \right) |d_n|.
	\end{equation}
From the relation (\ref{eq1}), we have that
$$\frac{|d_n|}{|z_n-t_n\sqrt{D}|}<\frac{q}{|N|f_n}(\delta+V)\sqrt{|d_n|},$$
and replacing in (\ref{eq3}), we obtain
	\begin{equation}\label{eq4}
	|t_n|<\frac{(\delta+V)}{2\sqrt{D}}\left(1+(\delta+|U|)^2 \left(\delta+\frac{1}{V} \right)^2 +\frac{1}{q}\left(\delta +	\frac{1}{V} \right)^2 \right)|N| f_n\sqrt{|d_n|}.
	\end{equation}
On the other hand, we have the triangle inequality
$$|z_n|\leq |z_n-t_n\sqrt{D}|+\sqrt{D}|t_n|.$$
Using the relations (\ref{eq1}) and (\ref{eq4}), we obtain
$$|z_n|<\frac{1}{q}\left(\delta +\frac{1}{V}\right)|N|f_n\sqrt{|d_n|} +\frac{(\delta+V)}{2}\left(1+(\delta+|U|)^2 \left(\delta+\frac{1}{V} \right)^2 +\frac{1}{q}\left(\delta +\frac{1}{V} \right)^2 \right)|N| f_n\sqrt{|d_n|}.$$
Taking the maximum of the respective constants yields the result.
\end{proof}

By Lemma \ref{lema_end}, for each $n$, $\operatorname{disc}(P_n)=\operatorname{disc}(\operatorname{End}(A_n, \iota_n))=\operatorname{disc}(R_n)=f_n^2 d_n$. 

\begin{proof}[\textbf{Proof of Theorem \ref{mainthm}}]
Since $d_n<0$ for each $n$, we have $H(\sqrt{d_n}) = \sqrt{|d_n|}$.

Using Proposition \ref{prop_height}, we obtain:
$$
\begin{aligned}
H(\tau_n) &= H\!\left(\frac{y_n\sqrt{D} + \varepsilon_n N f_n\sqrt{d_n}}{q(z_n - t_n\sqrt{D})}\right) \\
&\le H(y_n\sqrt{D} + \varepsilon_n N f_n\sqrt{d_n})\, H(q(z_n - t_n\sqrt{D})) \\
&\le 4 H(y_n\sqrt{D}) H(\varepsilon_n N f_n\sqrt{d_n}) H(q) H(z_n) H(-t_n\sqrt{D}) \\
&\le 4 H(y_n)H(\sqrt{D})H(\varepsilon_n)H(N)H(f_n) H(\sqrt{d_n})H(q)H(z_n)H(t_n)H(\sqrt{D}) \\
&\le 4Dq|N| f_n H(y_n)H(z_n)H(t_n) H(\sqrt{d_n}).
\end{aligned}
$$

By Lemma \ref{lema_1}, there exists a constant $C>0$ such that $|y_n| \le C f_n\sqrt{|d_n|}$, $|z_n| \le C f_n\sqrt{|d_n|}$, and $|t_n| \le C f_n\sqrt{|d_n|}$ for all sufficiently large $n$. It then follows that $H(y_n)=|y_n|\leq C f_n\sqrt{|d_n|}$, $H(z_n)=|z_n|\leq C f_n\sqrt{|d_n|}$, and $H(t_n)=|t_n|\leq C f_n\sqrt{|d_n|}$. Therefore 
	\begin{equation}\label{eq:altura_1}
	H(\tau_n) \le 4D q |N|f_n (C f_n \sqrt{|d_n|})^3 \sqrt{|d_n|} = 4Dq|N| C^3 f_n^4 |d_n|^{2}. 
	\end{equation}

Now, we will use this result to establish a lower bound for the distance between $\tau$ and $\tau_n$ in terms of $|d_n|$.

Applying Liouville's inequality with $r\leq2$, we obtain 
$$
\begin{aligned}
|\tau_n -\tau| &\geq \frac{1}{(2H(\tau) H(\tau_n))^2} \\
&\geq \frac{1}{(8H(\tau)D q |N|C^3 f_n^4 |d_n|^2)^2} \\
&\geq \frac{1}{64D^2q^2 N^2 C^6 H(\tau)^2 f_n^8|d_n|^{4}} \\
&\geq \frac{1}{64D^2q^2 N^2 C^6 H(\tau)^2 |\operatorname{disc}(P_n)|^{4}}. 
\end{aligned}
$$

Therefore, there exists a strictly positive constant $\kappa := \frac{1}{64D^2 q^2 N^2 C^6 H(\tau)^2}$ (depending only on $D$, $q$, $\tau$, $N$ and $C$) such that 
$$|\tau_n -\tau| \geq \frac{\kappa}{|\operatorname{disc}(P_n)|^{4}},$$
as desired. 
\end{proof}


\begin{thebibliography}{99}

\bibitem[AB04]{Alsina-Bayer} M. Alsina and P. B\'ayer, {\it Quaternion orders, quadratic forms, and Shimura curves}, CRM Monograph Series, 22, Amer. Math. Soc., Providence, RI, 2004; MR2038122.

\bibitem[Bak75]{Baker} A. Baker, {\it Transcendental number theory}, Cambridge Univ. Press, London-New York, 1975; MR0422171.

\bibitem[BIR08]{Baker-Il-Rumely} M.~H. Baker, S.-I. Ih and R.~S. Rumely, A finiteness property of torsion points, Algebra Number Theory {\bf 2} (2008), no.~2, 217--248; MR2377370.

\bibitem[BG06]{Bombieri} E. Bombieri and W. Gubler, {\it Heights in Diophantine geometry}, New Mathematical Monographs, 4, Cambridge Univ. Press, Cambridge, 2006; MR2216774.

\bibitem[CS86]{Cornell-Silverman} G. Cornell and J.~H. Silverman, {\it Arithmetic geometry}, Springer-Verlag, New York, 1986; MR0861969.

\bibitem[Cox89]{Cox} D.~A. Cox, {\it Primes of the form $x^2 + ny^2$}, A Wiley-Interscience Publication, Wiley, New York, 1989; MR1028322.

\bibitem[DH09]{David-Hirata} S. David and N. Hirata-Kohno, Linear forms in elliptic logarithms, J. Reine Angew. Math. {\bf 628} (2009), 37--89; MR2503235.

\bibitem[EG15]{Evertse-Gyoy} J.-H. Evertse and K. Gy\H ory, {\it Unit equations in Diophantine number theory}, Cambridge Studies in Advanced Mathematics, 146, Cambridge Univ. Press, Cambridge, 2015; MR3524535.

\bibitem[Hab15]{Habegger} P. Habegger, Singular moduli that are algebraic units, Algebra Number Theory {\bf 9} (2015), no.~7, 1515--1524; MR3404647.

\bibitem[Hab14]{Habegger2} P. Habegger, The Tate-Voloch conjecture in a power of a modular curve, Int. Math. Res. Not. IMRN {\bf 2014}, no.~12, 3303--3339; MR3217663.

\bibitem[HMR24]{Herrero_Menares} S. Herrero-Miranda, R. Menares and J. Rivera-Letelier, There are at most finitely many singular moduli that are $S$-units, Compos. Math. {\bf 160} (2024), no.~4, 732--770; MR4713026.

\bibitem[Kat92]{Katok} S. Katok, {\it Fuchsian groups}, Chicago Lectures in Mathematics, Univ. Chicago Press, Chicago, IL, 1992; MR1177168.

\bibitem[Lan82]{Lang} S. Lang, {\it Introduction to algebraic and abelian functions}, second edition, Graduate Texts in Mathematics, 89, Springer, New York-Berlin, 1982; MR0681120.

\bibitem[Mol12]{Molina} S. Molina, Specialization of heegner points and applications, Universitat Politècnica de Catalunya, 2012.

\bibitem[Mum70]{Mumford} D. Mumford, \emph{Abelian Varieties}, Oxford University Press, 1970.

\bibitem[Qiu23]{Qiu} C. Qiu, The Manin-Mumford conjecture and the Tate-Voloch conjecture for a product of Siegel moduli spaces, Algebra Number Theory {\bf 17} (2023), no.~5, 981--1016; MR4585351.

\bibitem[Shi94]{Shimura} G. Shimura, {\it Introduction to the arithmetic theory of automorphic functions}, reprint of the 1971 original, Publications of the Mathematical Society of Japan Kan\^o{} Memorial Lectures, 11 1, Princeton Univ. Press, Princeton, NJ, 1994; MR1291394.

\bibitem[Sil09]{Silverman-1} J.~H. Silverman, {\it The arithmetic of elliptic curves}, second edition, Graduate Texts in Mathematics, 106, Springer, Dordrecht, 2009; MR2514094.

\bibitem[TV96]{Tate-Voloch}J.~T. Tate and J.~F. Voloch, Linear forms in $p$-adic roots of unity, Internat. Math. Res. Notices {\bf 1996}, no.~12, 589--601; MR1405976.

\bibitem[Voi21]{Voight} J.~M. Voight, {\it Quaternion algebras}, Graduate Texts in Mathematics, 288, Springer, Cham, [2021] \copyright 2021; MR4279905.

\bibitem[Vig80]{Vigneras} M.-F. Vignéras, \emph{Arithmétique des algèbres de quaternions}, Lecture Notes in Mathematics 800, Springer, 1980.

\end{thebibliography}
\end{document}